\newtheorem{theorem}{Theorem}[section]
\newtheorem{corollary}[theorem]{Corollary}
\newtheorem{problem}[theorem]{Problem}
\newtheorem{definition}[theorem]{Definition}
\newtheorem{proposition}[theorem]{Proposition}
\begin{document}
\author{Gerardo Hern\'andez-del-Valle}
\title[On Schr\"odinger's equation, 3-dimensional bessel bridges]{On Schr\"odinger's equation,
 3-dimensional Bessel bridges, and passage time problems}
\address{Statistics Deparment, Columbia University\\ Mail Code
4403, New York, N.Y.} \email{gerardo@stat.columbia.edu}
\subjclass[2000]{Primary: 60J65,45D05,60J60; Secondary: 45G15,
45G10, 45Q05, 45K05.}
\date{December 2007}
\begin{abstract}
We obtain explicit solutions for the density $\varphi_T$ of the first-time $T$ that a one-dimensional Brownian process  $B$ reaches the twice, continuously differentiable  moving boundary $f$ and such that  $f''(t)\geq 0$ for all $t\in \mathbb{R}^+$.

We do so by finding the expected value of some functionals of  a 3-dimensional Bessel bridge $\tilde{X}$ and exploiting its relationship with first-passage time problems as pointed out by Kardaras (2007). It turns out that this problem is related to Schr\"odinger's equation with time-dependent linear potential, see Feng (2001).
\end{abstract}

\thanks{Statistics Department, Columbia University, Mail Code
4403, New York, N.Y.}\keywords{First-passage time problems, Schr\"odinger's equation,
Brownian motion, 3-dimensional Bessel bridge, Volterra
integral equations (of the second kind)} \maketitle

\section{Introduction}
%In this paper we use the existing relationship between a 3-dimensional Bessel bridge process [for a %general overview on Bessel process, the reader may consult for instance: Chapter 11 of Revuz and %Yor (2005)]  and first-passage time problems. The solution of one problem implies the solution of the %other, as has been pointed out by Kardaras (2007).

  The main aim of this paper is two-folded, given that $f\in\mathbb{C}^2[0,\infty)$, and such that $f''(t)\geq 0$, for all $t>0$: 
  \begin{enumerate}
\item  we compute
  \begin{equation}\label{gamma}
  \mathbb{E}\left[\exp\left\{-\int_0^sf''(u)\tilde{X}_udu \right\}\right],
  \end{equation}
where $\tilde{X}$ is a a 3-dimensional Bessel bridge starting at level $a>0$ [where $a=f(0)$] and reaching 0 at some fixed time $s$, and 
  \item as a corollary, we derive an explicit solution for the density $\varphi_f$ of the first time $T$ that a one-dimensional Brownian process $B$ reaches  the moving boundary $f$.
\end{enumerate}
  % of some functionals of a 3-dimensional Bessel bridge $\tilde{X}$.

The derviation of  $\varphi_f$ is an old and well studied  problem in probability and stochastic processes.
Yet, explicit solutions existed only in a limited number of special cases.

 The appeal of this problem stems not only from its applications (in finance, physics, biology, etc.) but also from its mathematical
 nature. As it turns out the problem may be studied through several different approaches:
 \begin{enumerate}
 \item  {\it Volterra integral equations\/}:  Several authors have derived Volterra integral equations in the study of the first-passage time problems [the reader may consult for instance Peskir (2002) for a detailed technical and historical account on the subject]. Yet, in the case in which the equation is not a convolution, or its kernel is not separable, numerical procedures are typically employed. Thus the results presented in this work provide explicit solutions to previously unknown integral equations. \\
 \item {\it Partial differential equations\/}: Groeneboom (1987), Salminen (1988), and Martin-L\"of (1998) tackled the problem of the quadratic boundary by solving a parabolic differential equation, derived from Girsanov's and Feynman-Kac theorems. In this paper not only do we generalize this result (an in fact provide an alternative proof for this special case), but also show how boundary problems are related to Schr\"odinger's equation with time-dependent linear potential.\\
\item {\it Bessel Bridge\/}: Kardaras (2007) notes that the problem of finding $\varphi_f$ is equivalent to the study of (\ref{gamma}). Several authors, for instance Revuz and Yor (2005),  have analyzed (\ref{gamma}), yet our results are new since we do not treat the squared process $\tilde{X}^2$.\\
\item {\it Montecarlo simulation\/}.
 \end{enumerate}

Roughly speaking, the solution to (\ref{gamma}), is based upon transforming a Cauchy problem (derived from Feynman-Kac's theorem), equations (\ref{cauchy1}) and (\ref{cauchy2}) into that of solving the Schr\"odinger equation (\ref{schro}) with time-dependent linear potential [see Feng (2001)]. And then proceed backwards to construct the corresponding Green's function.

In Section \ref{sec1}, after stating the problem we:  {\it (a)\/} point out the existing relationship  between a three-dimensional Bessel {\it bridge\/} process and  first-passage time problems. {\it (b)\/} We present our main results, Theorem \ref{thm} and Corollary \ref{coro}, which we will prove in the remainder of the paper.

In Section \ref{fey}, we state the problem in terms of the Feynman-Kac equation. We proceed in solving it by finding particular solutions with respect to the backward  $(t,a)$ and forward $(\tau,b)$ variables, in Sections \ref{sec2} and \ref{ff2} respectively. We conclude in Section \ref{gen} by using these results to construct the necessary Green's function $G$.

Both the backward and forward equations in Sections \ref{sec2} and \ref{ff2} are dealt with in a similar fashion: {\it (1)\/} Through algebraic transformations we obtain  Schr\"odinger's equation with time-dependent linear potential. {\it (2)\/} We solve the corresponding Schr\"odinger's equation, by reducing it into the heat equation.

%\section{Solution}

\section{The problem}\label{sec1}
\begin{problem}
The main motivation of this work is finding the density $\varphi_T$ of the first time $T$, that a one-dimensional, standard Brownian motion $B$ reaches the moving boundary $f$:
\begin{equation}\label{stop}
T:=\inf\left\{t\geq 0|B_t=f(t)\right\}
\end{equation}
where $f$ is twice, continuously differentiable, and  $f''(t)\geq 0$ for all $t>0$.
\end{problem}
 Alternatively, from Girsanov's theorem, the ``problem'' may be restated as:
\begin{proposition} Given that
\begin{equation}\label{level}
\varphi_a(t):=\frac{a}{\sqrt{2\pi t^3}}\exp\left\{-\frac{a^2}{2t}\right\}\qquad t\geq 0,\enskip a>0
\end{equation}
is the density of the first time that $B$ reaches the fixed level $a\in(0,\infty)$, the process $\tilde{X}$ is a 3-dimensional Bessel bridge, which has the following dynamics:
\begin{equation*}
d\tilde{X}_t=dW_t+\left(\frac{1}{\tilde{X}_t}-\frac{X_t}{s-t}\right)dt,\qquad \tilde{X}_0=a,\enskip t\in[0,s).
\end{equation*}
(Where $W$ is a Wiener process.)
Then, the distribution of $T$ equals
\begin{eqnarray*}
\mathbb{P}(T<t)&=&
\int_0^t\tilde{\mathbb{E}}\left[\exp\left\{-\int_0^sf''(u)\tilde{X}_udu\right\}\right]\\
&&\quad\times \exp\left\{-\frac{1}{2}\int_0^s(f'(u))^2du-f'(0)a\right\}\varphi_a(s)ds
\end{eqnarray*}
\end{proposition}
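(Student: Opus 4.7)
The plan is to reduce the curved-boundary hitting problem to the constant-boundary one via Girsanov's theorem, then identify the resulting conditioned process as the 3-dimensional Bessel bridge $\tilde{X}$.

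First I would reformulate the problem in terms of the gap process $Y_t := f(t)-B_t$, which starts at $Y_0=f(0)=a>0$ and satisfies $T=\inf\{t\ge 0 : Y_t=0\}$. Writing $Y_t = a+\int_0^t f'(u)\,du - B_t$, I would introduce the equivalent measure $\mathbb{Q}$ with
\begin{equation*}
\left.\frac{d\mathbb{Q}}{d\mathbb{P}}\right|_{\mathcal{F}_t}=\exp\left\{\int_0^t f'(u)\,dB_u-\tfrac{1}{2}\int_0^t(f'(u))^2\,du\right\},
\end{equation*}
so that by Girsanov $\tilde{B}_t:=B_t-\int_0^t f'(u)\,du$ is a $\mathbb{Q}$-Brownian motion and $Y_t = a-\tilde{B}_t$. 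Under $\mathbb{Q}$ the problem becomes the hitting of level $a$ by a standard Brownian motion, whose density is precisely $\varphi_a$ from (\ref{level}).

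Next I would compute the Radon--Nikodym derivative $d\mathbb{P}/d\mathbb{Q}|_{\mathcal{F}_T}$ and eliminate the stochastic integral by integration by parts. Rewriting $\int_0^T f'(u)\,dB_u$ in terms of $\tilde{B}$ gives, via Itô's product rule applied to $f'(u)\tilde{B}_u=f'(u)(a-Y_u)$ and using $Y_T=0$,
\begin{equation*}
\int_0^T f'(u)\,d\tilde{B}_u = a\,f'(0)+\int_0^T f''(u)Y_u\,du.
\end{equation*}
Substituting back yields
\begin{equation*}
\left.\frac{d\mathbb{P}}{d\mathbb{Q}}\right|_{\mathcal{F}_T}=\exp\left\{-a\,f'(0)-\tfrac{1}{2}\int_0^T(f'(u))^2\,du-\int_0^T f''(u)Y_u\,du\right\},
\end{equation*}
where every surviving term is a deterministic functional of $T$ together with the pathwise integral of $Y$ against $f''$.

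The last step is to disintegrate $\mathbb{E}^{\mathbb{Q}}[\,\cdot\,\mathbf{1}_{T\in ds}]$ by conditioning on $T=s$. The classical Williams--Pitman description of Brownian motion conditioned on its first hitting time identifies the $\mathbb{Q}$-law of $(Y_u)_{u\in[0,s]}$ given $T=s$ with that of a 3-dimensional Bessel bridge from $a$ to $0$ on $[0,s]$, i.e.\ the process $\tilde{X}$ with the SDE stated in the proposition. Pulling the purely $s$-dependent factors outside the Bessel-bridge expectation and multiplying by the $\mathbb{Q}$-density $\varphi_a(s)$ of $T$, then integrating over $s\in[0,t]$, produces the claimed formula.

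I expect the only delicate step to be the identification of the conditional law under $\mathbb{Q}$ with the prescribed Bessel bridge and the justification that the optional-stopping manipulation of $d\mathbb{P}/d\mathbb{Q}$ at the unbounded stopping time $T$ is legitimate; everything else is a direct Girsanov-plus-Itô computation.
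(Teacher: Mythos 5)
Your proposal is correct and follows essentially the same route as the paper: a Girsanov change of measure reducing the curved boundary to the constant level $a$ (your $\mathbb{Q}$ is the paper's $\tilde{\mathbb{P}}$), optional sampling to evaluate the Radon--Nikodym derivative at $T$, integration by parts to eliminate the stochastic integral, and the Williams--Pitman/Kardaras identification of the conditioned gap process given $T=s$ with the 3-dimensional Bessel bridge from $a$ to $0$. The only cosmetic difference is that you phrase the conditioning directly in terms of $Y_u=a-\tilde{B}_u=\tilde{X}_u$, whereas the paper writes $\tilde{B}_u = a-\tilde{X}_u$; these are identical.
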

\begin{proof} From Girsanov's theorem, and the fact that the boundary $f$ is twice continuously differentiable, the following Radon-Nikodym derivative:
\begin{eqnarray*}
\frac{d\mathbb{P}}{d\tilde{\mathbb{P}}}(t)&:=&\exp\left\{-\int_0^tf'(s)d\tilde{B}_s-\frac{1}{2}\int_0^t(f'(s))^2ds\right\}\\
&:=&\exp\left\{-f'(t)\tilde{B}_t+\int_0^tf''(s)\tilde{B}_sds-\frac{1}{2}\int_0^t(f'(s))^2ds\right\},
\end{eqnarray*}
is indeed a {\it martingale\/} and induces the following relationship:
\begin{eqnarray*}
\mathbb{P}(T\leq
t)&:=&\tilde{\mathbb{E}}\left[\frac{d\mathbb{P}}{d\tilde{\mathbb{P}}}(t)\mathbb{I}_{(T\leq
t)}\right]\\
&=&\tilde{\mathbb{E}}\left[\frac{d\mathbb{P}}{d\tilde{\mathbb{P}}}(T)\mathbb{I}_{(T\leq
t)}\right]\\
&=&\int_0^t\tilde{\mathbb{E}}\left[\frac{d\mathbb{P}}{d\tilde{\mathbb{P}}}(T)\Big{|}T=s\right]\varphi_a(s)ds,
\end{eqnarray*}
where the second equality follows from the optional sampling theorem, and the third   from conditioning with respect to $T$ under $\tilde{P}$, and $\varphi_a$ is defined in (\ref{level}).

Next, Kardaras (2007) shows that calculating [see for instance Chapter 11 in Revuz and Yor (2005), for a detailed overview of 3-dimensional Bessel bridges] :
\begin{equation}\label{expected}
\tilde{\mathbb{E}}\left[\frac{d\mathbb{P}}{d\tilde{\mathbb{P}}}(T)\Big{|}T=s\right]
\end{equation}
is equivalent to finding the expected value of a functional of a 3-dimensional Bessel bridge $\tilde{X}$, which at time 0 starts at $a$ and at time $s$ equals 0. Indeed,
\begin{eqnarray}
&&\nonumber\tilde{\mathbb{E}}\left[\frac{d\mathbb{P}}{d\tilde{\mathbb{P}}}(T)\Big{|}T=s\right]\\
&&\nonumber\qquad=
\tilde{\mathbb{E}}\left[\exp\left\{-f'(T)a+\int_0^Tf''(u)\tilde{B}_udu-\frac{1}{2}\int_0^T(f'(u))^2du\right\}\Big{|}T=s\right]\\
&&\nonumber\qquad=
\tilde{\mathbb{E}}\left[\exp\left\{-f'(s)a+\int_0^sf''(u)(a-\tilde{X}_u)du-\frac{1}{2}\int_0^s(f'(u))^2du\right\}\right]\\
&&\nonumber\qquad =
\tilde{\mathbb{E}}\Bigg{[}\exp\Bigg{\{}-f'(s)a+(f'(s)-f'(0))a-\int_0^sf''(u)\tilde{X}_udu\\
&&\nonumber\qquad\quad-\frac{1}{2}\int_0^s(f'(u))^2du\Bigg{\} }\Bigg{]}\\
&&\label{problem}\qquad =
\tilde{\mathbb{E}}\left[\exp\left\{-\int_0^sf''(u)\tilde{X}_udu\right\}\right]\exp\left\{-\frac{1}{2}\int_0^s(f'(u))^2du-f'(0)a\right\}
\end{eqnarray}
 thus the process $a-\tilde{X}$  equals 0 at $t=0$, and at $s$ reaches level $a$ for the first time, as required.
\end{proof}

We conclude from equation (\ref{problem}) that our goal is to compute the following expected value:
\begin{equation}\label{ex}
\tilde{\mathbb{E}}\left[\exp\left\{-\int_0^sf''(u)\tilde{X}_udu\right\}\right].
\end{equation}

Equation (\ref{ex}) has the following solution:
\begin{theorem}\label{thm}
 Let the boundary $f\in\mathbb{C}^2[0,\infty)$ satisfy  $f''(t)\geq 0$ for all $t$. Then:
\begin{eqnarray*}
\tilde{\mathbb{E}}\left[\exp\left\{-\int_t^sf''(u)\tilde{X}_udu\right\}\right]=v(t,a)
\end{eqnarray*}
where
%satisfies both the backward and forward equations (\ref{kom1}) and (\ref{kom2}) in the backward $(s,a)%$ and the forward $(t,b)$ variables respectively.
%hence if
\begin{eqnarray}
\label{v} v(t,a)&=&\int_{0}^{\infty}G(t,a;s,b)db,\quad 0\leq t<s,\enskip a\in \mathbb{R}^+\backslash\{0\}
\end{eqnarray}
given that
\begin{eqnarray*}
G(t,a;\tau,b)=\frac{\varphi_b(s-\tau)}{\varphi_a(s-t)}H(t,a;\tau,b),\quad 0\leq t<\tau<s
\end{eqnarray*}
where
\begin{eqnarray*}
&&H(t,a;\tau,b)=\\
&&\enskip\frac{1}{\sqrt{2\pi(\tau-t)}}\exp\left\{\frac{1}{2}\int_t^\tau(f'(u))^2du-f'(\tau)b+f'(t)a\right\}\\
&&\enskip\times\Bigg{[}\exp\left\{-\frac{\left(b-a-\int_t^\tau f'(u)du\right)^2}{2(\tau-t)}\right\}
-\exp\left\{-\frac{\left(b+a-\int_t^\tau f'(u)du\right)^2}{2(\tau-t)}\right\}\Bigg{]}.
\end{eqnarray*}
Where $\varphi_a$ is defined in (\ref{level}).
\end{theorem}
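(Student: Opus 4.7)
The plan is to derive the Feynman--Kac PDE for $v(t,a)$ and then reduce it, via two successive algebraic transformations, to a heat equation on the half line whose Green's function is classical; assembling the pieces then gives the formula for $G$.

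By Feynman--Kac applied to the 3-dimensional Bessel bridge (whose infinitesimal generator is $\tfrac12\partial_{aa}+(\tfrac1a-\tfrac{a}{s-t})\partial_a$), the function $v$ satisfies the backward Cauchy problem
\begin{equation*}
\partial_t v + \tfrac12\partial_{aa}v + \Big(\tfrac1a-\tfrac{a}{s-t}\Big)\partial_a v - f''(t)\,a\,v=0,\qquad v(s^-,a)=1,\quad v(t,0)=0.
\end{equation*}
To eliminate the Bessel drift I use the $h$-transform by $\varphi_a(s-t)$, which is the Doob harmonic function that turns Brownian motion killed at $0$ into the 3-dimensional Bessel bridge; a direct calculation using $\partial_a\log\varphi_a(s-t)=\tfrac1a-\tfrac{a}{s-t}$ and $(\partial_t+\tfrac12\partial_{aa})\varphi_a(s-t)=0$ shows that $\tilde v(t,a):=\varphi_a(s-t)\,v(t,a)$ satisfies the Schr\"odinger equation with time-dependent linear potential,
\begin{equation*}
\partial_t\tilde v+\tfrac12\partial_{aa}\tilde v - f''(t)\,a\,\tilde v=0,\qquad \tilde v(t,0)=0,
\end{equation*}
and the analogous $h$-transform on the forward side at $(\tau,b)$ will be responsible for the ratio $\varphi_b(s-\tau)/\varphi_a(s-t)$ appearing in $G$.

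Following Feng (2001), I solve this Schr\"odinger equation by the gauge ansatz $\tilde v=\exp\{\alpha(t)a+\beta(t)\}\,w$. Substituting and matching coefficients of $a$ and of $1$ forces $\alpha'(t)=f''(t)$ and $\beta'(t)=-\tfrac12\alpha(t)^2$, so the natural choice is $\alpha(t)=f'(t)$ and $\beta(t)=-\tfrac12\int_0^t (f'(u))^2\,du$; the reduced PDE for $w$ is a heat equation on $(0,\infty)$ carrying the deterministic drift $f'(t)$, with Dirichlet boundary at $a=0$. Its Green's function, obtained by the method of images in the shifted variable $\xi=a-\int_0^t f'(u)\,du$, is precisely the difference of the two Gaussians displayed inside the brackets of $H$. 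Undoing the gauge at both endpoints produces the exponential prefactor $\exp\{\tfrac12\int_t^\tau (f'(u))^2du-f'(\tau)b+f'(t)a\}$, and undoing the $h$-transform restores the ratio $\varphi_b(s-\tau)/\varphi_a(s-t)$; together this yields the stated formula for $G$. The identity $v(t,a)=\int_0^\infty G(t,a;s,b)\,db$ is then obtained by integrating the intermediate-time Feynman--Kac kernel against the terminal data $v(s^-,\cdot)=1$ in the limit $\tau\uparrow s$.

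I expect the most delicate step to be this final limit. The $h$-transform factor $\varphi_b(s-\tau)$ becomes singular near $b=0$ as $\tau\uparrow s$, and one must check that the bracketed part of $H$ vanishes fast enough at $b=0$ to cancel this singularity and deliver exactly the terminal condition $v(s^-,a)=1$. Equivalently, one must verify that the Chapman--Kolmogorov identity for the Feynman--Kac-weighted Bessel-bridge semigroup holds in both the backward and forward variables and collapses correctly to the identity map at $\tau=s$; each individual ingredient (Feynman--Kac, $h$-transform, gauge to heat, method of images) is itself routine, but coordinating them through this degenerate limit is where the real technical burden sits.
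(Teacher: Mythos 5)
Your proposal is correct and follows essentially the same route as the paper: Feynman--Kac to get the Cauchy problem, the $h$-transform by $\varphi_a(s-t)$ (which the paper carries out in two algebraic steps, $A(t)e^{B(t)a^2}$ followed by multiplication by $a$) to reach Schr\"odinger's equation with time-dependent linear potential, Feng's gauge/shift reduction to the heat equation, the method of images for the Dirichlet condition at the origin, and reassembly of $G=\frac{\varphi_b(s-\tau)}{\varphi_a(s-t)}H$. Your closing remark about the delicate limit $\tau\uparrow s$ identifies a verification the paper itself leaves implicit.
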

Thus the density $\varphi_T$ of the stopping $T$  in equation (\ref{stop}) equals:
\begin{corollary}\label{coro} Let $B$ be a one-dimensional standard Brownian motion $B$ and let the stopping time $T$ be as in (\ref{stop}). Then, the density $\varphi_T$  equals:
\begin{eqnarray*}
\varphi_T(s)=v(0,a)\varphi_a(s)\exp\left\{-\frac{1}{2}\int_0^s(f'(u))^2du-f'(0)a\right\}.
\end{eqnarray*}
where $v$ and $\varphi_a$ are defined in (\ref{v}) and (\ref{level}) respectively.
\end{corollary}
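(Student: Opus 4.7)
The plan is to combine the Proposition with Theorem~\ref{thm} and then differentiate in the upper limit. The Proposition already expresses $\mathbb{P}(T<t)$ as a single integral in $s$ of three factors: the conditional Girsanov expectation $\tilde{\mathbb{E}}[\exp\{-\int_0^s f''(u)\tilde X_u\,du\}]$, the deterministic exponential $\exp\{-\tfrac{1}{2}\int_0^s (f'(u))^2du - f'(0)a\}$, and the fixed-level first-passage density $\varphi_a(s)$. So the corollary is just the time-derivative of this distribution function, with the Bessel-bridge expectation replaced by its closed form.

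Concretely, I would proceed as follows. First, invoke Theorem~\ref{thm} at $t=0$: this identifies the inner expectation $\tilde{\mathbb{E}}\bigl[\exp\{-\int_0^s f''(u)\tilde X_u\,du\}\bigr]$ with $v(0,a)$, where the hidden terminal-time parameter in $v$ is the same $s$ appearing in the outer integrand. Substituting this into the Proposition gives
\begin{equation*}
\mathbb{P}(T<t) = \int_0^t v(0,a)\,\varphi_a(s)\exp\!\left\{-\frac{1}{2}\int_0^s (f'(u))^2du - f'(0)a\right\}ds.
\end{equation*}
Second, apply the fundamental theorem of calculus in $t$. The integrand is continuous in $s$ on $(0,\infty)$: $\varphi_a$ is smooth there, the exponential factor is smooth by $f\in \mathbb{C}^2$, and $s\mapsto v(0,a;s)$ inherits continuity from the explicit formula for the kernel $G$ in Theorem~\ref{thm}. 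Differentiating therefore yields the stated expression for $\varphi_T$.

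There is really no obstacle of substance left; the only small point worth checking is that $\mathbb{P}(T<t)$ is indeed absolutely continuous so that the pointwise derivative is a bona fide density. This follows because the right-hand side above is an integral of a locally integrable nonnegative function of $s$, and because the first-passage time to a $\mathbb{C}^2$ curve for Brownian motion is known to have a continuous density on $(0,\infty)$ under the convexity hypothesis $f''\geq 0$. Thus the equality of measures improves to equality of densities, giving Corollary~\ref{coro}.
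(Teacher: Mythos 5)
Your argument is correct and is exactly the route the paper intends: the corollary is stated as an immediate consequence of the Proposition in Section~\ref{sec1} together with Theorem~\ref{thm}, obtained by substituting $v(0,a)$ for the Bessel-bridge expectation and differentiating the resulting integral representation of $\mathbb{P}(T<t)$ in its upper limit. The paper offers no further detail, so your added remarks on continuity of the integrand and absolute continuity of the law of $T$ only make explicit what the paper leaves implicit.
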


%And has the following moments
%\begin{eqnarray*}
%\mathbb{E}^a\left(\frac{\tilde{X}_t}{\sqrt{s-t}}\right)^p=c_p\left(\frac{t}{s}\right)^{p/2}\exp\left\{-\frac{a^2}{2}\left(\frac{1}{t}-\frac{1}{s}\right)\right\}
%H\left(\frac{a^2}{2}\left(\frac{1}{t}-\frac{1}{s}\right);\frac{p+3}{2},\frac{3}{2}\right),\quad
%t\leq s.
%\end{eqnarray*}
%$c_p:=\mathbb{E}|Z|^{p+2}$.
%\begin{eqnarray*}
%c_1&=&\mathbb{E}|Z|^3\\
%&=&2 \sqrt{\frac{2}{\pi}}
%\end{eqnarray*}

\section{The Feyman-Kac equation}\label{fey}

As the reader may suspect from equation (\ref{ex}) and the Green's function appearing in equation (\ref{v}), in Theorem \ref{thm}, we will follow a P.D.E approach to solve our problem. To do so, we will first introduce Feynman-Kac's theorem and some necessary tools, and will conclude this section by introducing the corresponding Cauchy problem (see Problem \ref{bessie}). 

If we let
the operator $\mathcal{A}_t$ satisfy
\begin{eqnarray*}
(\mathcal{A}_tv)(t,a)&=&\frac{1}{2}\sigma^2(t,a)\frac{\partial^2v}{\partial
a^2}(t,a)+b(t,a)\frac{\partial v}{\partial a}(t,a)\\
&=&\frac{1}{2}\frac{\partial^2v}{\partial
a^2}(t,a)+\left(\frac{1}{a}-\frac{a}{s-t}\right)\frac{\partial
v}{\partial a}(t,a)\qquad 0\leq t<s,\enskip a\in\mathbb{R}^+\backslash\{0\},
\end{eqnarray*}
Then, we may introduce a simplified version of the celebrated Feynman-Kac theorem which relates {\it parabolic\/} partial differential equations with stochastic differential equations [for a detailed and general overview of this and related topics the reader may consult for instance: Chapter 5, in Karatzas and Shreve (1998)]: 

\begin{theorem}  [Feynman-Kac] Given the Cauchy problem
\begin{eqnarray*}
-\frac{\partial v}{\partial t}+kv=\mathcal{A}_tv+g;&&\enskip [0,T)\times \mathbb{R}\\
v(s,a)=\tilde{f}(a);&&\enskip a\in \mathbb{R}^+\backslash\{0\}
\end{eqnarray*}
where $k(t,x):[0,s]\times \mathbb{R}^+\to[0,\infty)$, and functions $f$ and $g$ satisfy the standard assumptions [see Theorem 5.7.6 in Karatzas and Shreve (1998)], and $v$ satisfies the polynomial growth condition. Then,
the function $v(t,a)$ admits the stochastic representation
\begin{eqnarray*}
v(t,a)&=&\tilde{\mathbb{E}}^{t,a}\Bigg{[}\tilde{f}(\tilde{X}_s)\exp\left\{-\int_t^sk(u,\tilde{X}_u)du\right\}\\
&&+\int_t^sg(v,\tilde{X}_v)\exp\left\{-\int_t^vk(u,\tilde{X}_u)du\right\}dv\Bigg{]}
\end{eqnarray*}
on $[0,s]\times \mathbb{R}^+$; in particular, such a solution is unique.
\end{theorem}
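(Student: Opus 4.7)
The plan is to prove the representation by the standard martingale route: exhibit a candidate process built from $v$ and the 3-dimensional Bessel bridge $\tilde X$, use It\^o's formula to show that the backward PDE forces its drift to vanish, and then close the argument by showing that the resulting local martingale is a true martingale under the hypotheses on $\tilde f$, $g$, $k$ and $v$.

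Concretely, fix $(t,a)\in[0,s)\times\mathbb{R}^+\setminus\{0\}$, let $\tilde X$ solve the bridge SDE started at $\tilde X_t=a$, write $Z_u:=\int_t^u k(r,\tilde X_r)\,dr$, and define
\begin{equation*}
Y_u:=v(u,\tilde X_u)e^{-Z_u}+\int_t^u g(r,\tilde X_r)e^{-Z_r}\,dr,\qquad t\le u<s.
\end{equation*}
Applying It\^o's formula to $v(u,\tilde X_u)e^{-Z_u}$ and using $d\tilde X_u=b(u,\tilde X_u)du+\sigma(u,\tilde X_u)dW_u$ with $\sigma\equiv 1$ and $b(u,a)=1/a-a/(s-u)$, I would obtain
\begin{equation*}
dY_u=e^{-Z_u}\bigl(v_t+\mathcal{A}_u v-kv+g\bigr)(u,\tilde X_u)\,du+e^{-Z_u}v_a(u,\tilde X_u)\,dW_u.
\end{equation*}
The bracketed term vanishes identically on $[0,s)\times\mathbb{R}^+\setminus\{0\}$ by the Cauchy problem, so $Y$ is a continuous local martingale on $[t,s)$.

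To pass from local to honest martingale, I would introduce a localizing sequence of stopping times $\tau_n:=\inf\{u\ge t:\tilde X_u\notin(1/n,n)\}\wedge(s-1/n)$, apply the optional sampling theorem to get $v(t,a)=\mathbb{E}^{t,a}[Y_{\tau_n}]$, and then send $n\to\infty$. The polynomial growth of $v$ and the standard integrability assumptions on $\tilde f$ and $g$, combined with the fact that the 3-dimensional Bessel bridge has moments of all orders uniformly on $[t,s]$ and hits $0$ only at time $s$, provide a uniform integrability bound that lets one interchange limit and expectation. The limit of the first term of $Y_{\tau_n}$ becomes $\tilde f(\tilde X_s)\exp(-\int_t^s k(u,\tilde X_u)du)$ by continuity of $v$ up to the terminal time ($v(s,\cdot)=\tilde f$), and the dominated convergence handles the integral term.

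The main technical obstacle is precisely this last step: the diffusion coefficients of $\tilde X$ are singular at $a=0$ and the drift blows up as $t\uparrow s$, so some care is required to argue that $\tilde X_{\tau_n}\to \tilde X_s=0$ without losing control of $v(\tau_n,\tilde X_{\tau_n})$ or of the stochastic integral $\int_t^{\tau_n}e^{-Z_u}v_a\,dW_u$. This is where the polynomial growth hypothesis on $v$ and the non-negativity of $k$ are doing the real work, giving the domination needed to justify the limit. Uniqueness then follows immediately: any other solution admits the same stochastic representation, hence coincides with $v$.
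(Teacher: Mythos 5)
The paper does not prove this theorem at all: it is quoted verbatim as a ``simplified version'' of Theorem 5.7.6 in Karatzas and Shreve (1998), so there is no in-paper argument to compare against. Your proposal is the standard martingale proof that the cited reference itself uses --- apply It\^o's formula to $v(u,\tilde X_u)e^{-Z_u}$ plus the accumulated $g$-integral, observe that the Cauchy problem kills the drift, localize, and pass to the limit using polynomial growth and $k\ge 0$ --- and the It\^o computation and drift cancellation are correct as written. The one soft spot is the final limiting step, which you flag but do not close: the classical Feynman--Kac hypotheses (Lipschitz, linear-growth coefficients) fail for the Bessel bridge, whose drift is singular at $a=0$ and blows up as $u\uparrow s$, so the uniform integrability of $Y_{\tau_n}$ and the convergence $v(\tau_n,\tilde X_{\tau_n})\to\tilde f(0)$ genuinely require an argument (e.g.\ that $\tilde X$ stays strictly positive on $[t,s)$ and that $v$ extends continuously to the terminal corner). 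Since the paper sidesteps this entirely by citation, your sketch is at least as complete as the source; just be aware that an honest proof for this specific degenerate diffusion needs that extra step spelled out.
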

The following definition, will guide us on the construction of our Green's function $G$:
\begin{definition}\label{def2} [Definition 5.7.9 in Karatzas and Shreve (1998)]  A fundamental solution of the second-order partial differential equation
\begin{equation}\label{def1}
-\frac{\partial u}{\partial t}+ku=\mathcal{A}_t u
\end{equation}
is a nonnegative function $G(t,a;\tau,b)$ defined for $0\leq t\leq\tau\leq s$, $a\in \mathbb{R}^+$, $b\in\mathbb{R}^+$ with the property that for every $f\in C_0(\mathbb{R}^+)$, $\tau\in(0,s]$, the function
\begin{equation*}
u(t,a):=\int_0^\infty G(t,a;\tau,b)\tilde{f}(b)db;\quad 0\leq t<\tau, \enskip a\in \mathbb{R}^+
\end{equation*}
is bounded, of class $C^{1,2}$, satisfies (\ref{def1}), and
\begin{equation*}
\lim\limits_{t\uparrow\tau}u(t,a)=\tilde{f}(a),\quad a\in \mathbb{R}^+.
\end{equation*}
\end{definition}

For fixed $(\tau,b)\in(0,s]\times \mathbb{R}^+$, the function
$$\varphi(t,a):=G(t,a;\tau,b)$$
is of class $C^{1,2}$ and satisfies the backward Kolomogorov equation in the backward variables $(t,a)$. And, for fixed $(t,a)\in [0,s)\times \mathbb{R}^+$ the function
$$\psi(\tau,b):=G(t,a;\tau,b)$$
is of class $C^{1,2}$ and satisfies the forward Kolmogorov equation (also known as the Fokker-Planck equation).

\indent Hence, from equation (\ref{problem}) it follows that:
%$\tilde{X}_s=0$, $g\equiv 0$, thus
\begin{equation*}
\tilde{\mathbb{E}}^{0,a}\left[\exp\left\{-\int_0^sf''(u)\tilde{X}_udu\right\}\right]=v(0,a).
\end{equation*}
where
\begin{eqnarray*}
v(t,a)&:=&\mathbb{E}^{t,a}\Bigg{[}\tilde{f}(0)\exp\left\{-\int_t^sk(u,\tilde{X}_u)du\right\}\Bigg{]},
\end{eqnarray*}
for  $k(u,a):=f''(u)\cdot a$, and $v(T,a)=\tilde{f}(a)\equiv 1$, {\it i.e.\/},

\begin{problem}[Bessel's Cauchy problem]\label{bessie} Equation (\ref{ex}) satisfies the following Cauchy problem:
\begin{eqnarray}
\label{cauchy1}-\frac{\partial v}{\partial
t}(t,a)+f''(t)av(t,a)=\frac{1}{2}\frac{\partial^2v}{\partial
a^2}(t,a)&&\\
 \nonumber +\left(\frac{1}{a}-\frac{a}{s-t}\right)\frac{\partial
v}{\partial a}(t,a);&&\enskip \hbox{in }[0,s)\times \mathbb{R}^+\backslash \{0\}\\
\label{cauchy2}v(s,a)=1;&&\enskip a\in \mathbb{R}^+\backslash \{0\}.
\end{eqnarray}
\end{problem}

Definition \ref{def2} hints us towards the path we should follow. We should first find a general solution to the backward-Kolmogorov equation, in Section \ref{sec2}:
\begin{eqnarray*}
-\frac{\partial \varphi}{\partial
t}(t,a)+f''(t)a\varphi(t,a)=\frac{1}{2}\frac{\partial^2\varphi}{\partial
a^2}(t,a)&&\\
 \nonumber +\left(\frac{1}{a}-\frac{a}{s-t}\right)\frac{\partial
\varphi}{\partial a}(t,a);&&\enskip \hbox{in }[0,s)\times \mathbb{R}^+\backslash \{0\}\
\end{eqnarray*}
and to the forward-Kolmogorov equation, in Section \ref{ff2}:
\begin{eqnarray*}
\frac{\partial \psi(\tau,b)}{\partial \tau}(\tau,b)+f''_k(\tau)b\psi(\tau,b)=\frac{1}{2}\frac{\partial^2\psi}{\partial b^2}(\tau,b)\\
-\frac{\partial}{\partial b}\left[\left(\frac{1}{b}-\frac{b}{s-\tau}\right)\psi(\tau,b)\right]
;&&\enskip \hbox{in }(0,s]\times \mathbb{R}^+\backslash \{0\}.
\end{eqnarray*}
Which after proper change's of variable and algebraic transformations will equal respectively:
\begin{eqnarray*}
-\frac{\partial\varphi^3}{\partial t}(t,y)&=&\frac{1}{2}\frac{\partial^2\varphi^3}{\partial y^2}(t,y),\quad 0\leq t<s\\
\frac{\partial\psi^3}{\partial t}(\tau,z)&=&\frac{1}{2}\frac{\partial^2\psi^3}{\partial z^2}(\tau,z),\quad 0\leq \tau\leq s,\\
\end{eqnarray*}
and $y$ and $z$ are function of $a$ and $b$ respectively (and will be determined later).
Since both $\varphi$ and $\psi$ may be reduced to the Heat equation, and the variables $a,b$ are only defined on the half line. Then the Green's function $\tilde{G}$ which relates these two solutions will be of the form:
\begin{equation*}
\tilde{G}(t,y;\tau,z)\!=\!\frac{1}{\sqrt{2\pi(\tau-t}}\left(\exp\left\{-\frac{(z-y)^2}{2(\tau-t)}\right\}-\exp\left\{-\frac{(z+y)^2}{2(\tau-t)}\right\}\right)\enskip 0\leq t<\tau\leq s.
\end{equation*}
Thus, the construction of the particular solution $G$ will just be a backwards procedure.
\section{Schr\"odinger's backward equation}\label{sec2}

In this section we obtain a particular solution to  the backward equation (\ref{cauchy1})   by first transforming it into Schr\"odinger's backward-equation with time-dependent linear potential through algebraic transformations.

\begin{proposition} Equation (\ref{cauchy1}) satisfies the following relationship
\begin{equation}\label{partial}
\varphi(t,a)=\varphi^1(t,a)\left\{A(t)\exp\left[B(t)a^2\right]\right\},
\end{equation}
where
\begin{eqnarray}
\nonumber B(t)=\frac{1}{2(s-t)}\label{b}&&\\
\nonumber A(t)=c\cdot(s-t)^{3/2}\label{a}&&\\
af''(t)\varphi^1(t,a)-\frac{1}{2}\varphi^1_{aa}(t,a)-\frac{1}{a}\varphi^1_a(t,a)-\varphi^1_t(t,a)=0&&\label{partial2}
\end{eqnarray}
\end{proposition}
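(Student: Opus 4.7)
The plan is to treat the claimed factorization as an ansatz and substitute it into (\ref{cauchy1}), then to show that the three functions appearing in the problem ($A$, $B$, and the residual $\varphi^1$) can be decoupled by matching coefficients in powers of $a$. The motivation is that the offending piece of the Bessel drift $-a/(s-t)$ is a linear-in-$a$ term with explicit time dependence, and such terms are typically absorbed by a Gaussian multiplier in $a$ whose variance depends on $t$; this is precisely the role of $\exp[B(t)a^2]$. The additional purely time-dependent factor $A(t)$ gives us a second degree of freedom to kill the leftover zeroth-order term.

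Concretely, I would set $M(t,a):=A(t)\exp[B(t)a^2]$ and compute $\varphi_t$, $\varphi_a$, $\varphi_{aa}$ for $\varphi=\varphi^1 M$ by the product and chain rules. Plugging into (\ref{cauchy1}) and dividing through by $M$ yields an equation of the schematic form
\begin{equation*}
-\varphi^1_t + f''(t)a\,\varphi^1 - \varphi^1\Bigl(\tfrac{A'}{A} + B'(t)a^2\Bigr)
= \tfrac{1}{2}\varphi^1_{aa} + \varphi^1_a\Bigl(2Ba + \tfrac{1}{a} - \tfrac{a}{s-t}\Bigr) + \varphi^1\Bigl(3B + 2B^2 a^2 - \tfrac{2Ba^2}{s-t}\Bigr).
\end{equation*}
The strategy is then to read off three conditions: (i) the coefficient of $a\,\varphi^1_a$ must vanish, forcing $2B(t)=1/(s-t)$, i.e.\ $B(t)=1/(2(s-t))$; (ii) the coefficient of $a^2\varphi^1$ must match, which requires $-B'(t)=2B^2-2B/(s-t)$, a relation that one verifies is automatically satisfied by the $B$ from (i); and (iii) the constant-in-$a$ coefficient of $\varphi^1$ must match, giving $-A'(t)/A(t)=3B(t)=3/(2(s-t))$. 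Integrating (iii) yields $A(t)=c(s-t)^{3/2}$ for an arbitrary constant $c$, as claimed.

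After these substitutions the remaining terms on both sides of the PDE coincide with $af''(t)\varphi^1$, $\tfrac{1}{2}\varphi^1_{aa}$, $\tfrac{1}{a}\varphi^1_a$ and $-\varphi^1_t$, which rearranged gives exactly (\ref{partial2}). Conversely, if $\varphi^1$ solves (\ref{partial2}) then, retracing the algebra, $\varphi=\varphi^1 M$ solves (\ref{cauchy1}), establishing the equivalence.

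There is no conceptual obstacle here; the only delicate point is the bookkeeping. I would organize the computation by grouping the post-substitution terms according to whether they multiply $\varphi^1_{aa}$, $\varphi^1_a$, or $\varphi^1$, and further within the last group by powers $a^{-1}, a^0, a^1, a^2$. The key sanity check is that the choice of $B$ dictated by the linear-$a$ drift is exactly the one that also kills the $a^2\varphi^1$ terms, so that the system is not overdetermined; once this consistency is verified, the equation for $A$ is a separable ODE with the displayed solution.
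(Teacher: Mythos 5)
Your proposal is correct and follows essentially the same route as the paper: substitute the ansatz $\varphi=\varphi^1 A(t)e^{B(t)a^2}$ into (\ref{cauchy1}), group the resulting terms by whether they multiply $\varphi^1_{aa}$, $\varphi^1_a$, or $\varphi^1$ and by powers of $a$, and read off $B=1/(2(s-t))$ from the $a\,\varphi^1_a$ coefficient, the (automatically satisfied) Riccati-type relation from the $a^2\varphi^1$ coefficient, and $A=c(s-t)^{3/2}$ from the constant term, leaving exactly (\ref{partial2}). Your explicit check that the $a^2\varphi^1$ condition is consistent with the $B$ forced by the drift term is a point the paper glosses over, but the computation is otherwise identical.
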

%\noindent{\bf Remark.} In particular, when $f''_k(s)=b$ (which is the case in which the boundary is quadratic), the partial differential equation (\ref{partial2}) is separable, hence a particular solution of $v$
%can be expressed as:
%\begin{eqnarray*}
%v(t,a,s)=\varphi_\lambda(a)\psi_\lambda(t)\left[A(t,s)\exp\left(B(t,s)a^2\right)\right].
%\end{eqnarray*}
%where
%\begin{eqnarray*}
%\frac{1}{2}\varphi''_{aa}+\frac{1}{a}\varphi'_a+\left(\lambda-ab\right)\varphi&=&0\\
%-\psi'_t+\lambda\psi&=&0.
%\end{eqnarray*}
%{\it i.e.,\/}
%\begin{eqnarray*}
%\varphi_\lambda(a)&=&\frac{1}{a}\left[c_1\hbox{Ai}\left(\frac{2^{1/3}(ab-\lambda)}{b^{2/3}}\right)+c_2\hbox{Bi}\left(\frac{2^{1/3}(ab-\lambda)}{b^{2/3}}\right)\right]\\
%\psi_\lambda(t)&=&\exp\left\{\lambda t\right\}.
%\end{eqnarray*}
%And thus, the {\it particular\/} solution can be constructed [see Martin-L\"of (1998)].\\[0.3cm]
\noindent{\it Proof\/}.
Let us first start by analyzing the transformation in equation (\ref{partial}):
\begin{eqnarray*}
\frac{\partial \varphi}{\partial a}(t,a)&=&\varphi^1_a(t,a)A(t)\exp\left[B(t)a^2\right]\\&&+2a \varphi^1(t,a) A(t)B(t)\exp\left[B(t)a^2\right]\\
\frac{\partial^2 \varphi}{\partial a^2}(t,a)&=&\varphi^1_{aa}(t,a)A(t)\exp\left[B(t)a^2\right]\\&&+2a\varphi^1_a(t,a) A(t)B(t)\exp\left[B(t)a^2\right]\\
&&+2\varphi^1(t,a) A(t)B(t)\exp\left[B(t)a^2\right]\\&&+2a\varphi^1_a(t,a)A(t)B(t)\exp\left[B(t)a^2\right]\\
&&+4a^2\varphi^1(t,a) A(t)B^2(t)\exp\left[B(t)a^2\right]\\
\frac{\partial \varphi}{\partial t}(t,a)&=&\varphi^1(t,a) A_t(t)\exp\left[B(t)a^2\right]\\&&+a^2\varphi^1(t,a) A(t)B_t(t)\exp\left[B(t)a^2\right]\\
&&+\varphi_t(t,a)A(t)\exp\left[B(t)a^2\right].
\end{eqnarray*}
Substituting these equations into (\ref{cauchy1}) we get
\begin{eqnarray*}
&&-\frac{A_t(t)}{A(t)}-a^2B_t(t)-\frac{\varphi^1_t}{\varphi^1}+a f''(t)=\frac{1}{2}\frac{\varphi^1_{aa}}{\varphi^1}+a\frac{\varphi^1_a}{\varphi}B(t)\\&&\qquad +B(t)
 +a\frac{\varphi^1_a}{\varphi^1}B(t)+2a^2B^2(t)+\left(\frac{1}{a}-\frac{a}{s-t}\right)\frac{\varphi^1_a}{\varphi^1}\\
&&\qquad+\left(\frac{1}{a}-\frac{a}{s-t}\right)2aB(t),
\end{eqnarray*}
equating to zero we have
\begin{eqnarray*}
&&-\frac{A_t(t)}{A(t)}-a^2B_t(t)-\frac{\varphi^1_t}{\varphi^1}+a f''(t)-\frac{1}{2}\frac{\varphi^1_{aa}}{\varphi^1}-2a\frac{\varphi^1_a}{\varphi^1}B(t)\\&&\qquad-B(t)
-2a^2B^2(t)-\frac{1}{a}\frac{\varphi^1_a}{\varphi^1}+\frac{a}{s-t}\frac{\varphi^1_a}{\varphi^1}\\
&&\qquad-2B(t)+2\frac{a^2}{s-t}B(t)=0
\end{eqnarray*}
finally, we factorize
\begin{eqnarray*}
&&-\left(\frac{A_t(t)}{A(t)}+3B(t)\right)-a^2\left(B_t(t)+2B^2(t)-2\frac{1}{s-t}B(t)\right)\\
&&-a\frac{\varphi^1_a}{\varphi^1}\left(2B(t)-\frac{1}{s-t}\right)+af''(t)-\frac{1}{2}
\frac{\varphi^1_{aa}}{\varphi^1}-\frac{1}{a}\frac{\varphi^1_a}{\varphi^1}-\frac{\varphi^1_t}{\varphi^1}=0.
\end{eqnarray*}
This equality holds for every $a>0$ if and only if
\begin{eqnarray*}
B(t)=\frac{1}{2(s-t)}\label{b}&&\\
A(t)=c\cdot(s-t)^{3/2}\label{a}&&\\
af''(t)\varphi^1(t,a)-\frac{1}{2}\varphi^1_{aa}(t,a)-\frac{1}{a}\varphi^1_a(t,a)-\varphi^1_t(t,a)=0&&\label{partial2}
\end{eqnarray*}
as claimed. $\Box$\\[0.3cm]
\subsection{The Schr\"odinger's backward equation} If we let $\varphi^2=a\cdot \varphi^1$, then
\begin{eqnarray*}
\varphi^1_t&=&\frac{1}{a}\varphi^2_t\\
\varphi^1_a&=&\frac{1}{a}\varphi^2_a-\frac{1}{a^2}\varphi^2\\
\varphi^1_{aa}&=&\frac{1}{a}\varphi^2_{aa}-\frac{1}{a^2}\varphi^2_a+\frac{2}{a^3}\varphi^2-\frac{1}{a^2}\varphi^2_a,
\end{eqnarray*}
after substituting in equation (\ref{partial2}):
\begin{eqnarray*}
af''(t)\varphi^2(t,a)-\frac{1}{2}\varphi^2_{aa}(t,a)-\varphi^2_t(t,a)=0.
\end{eqnarray*}
or, equivalently
\begin{equation}\label{schro}
-\frac{\partial \varphi^2}{\partial t}(t,a)=\frac{1}{2}\frac{\partial^2\varphi^2}{\partial a^2}(t,a)-af''(t)\varphi^2(t,a)
\end{equation}
which is  Schr\"odinger's backward-equation with time-dependent linear potential [see for instance, Feng (2001)].

%\noindent{\bf Observation.} From condition (\ref{cauchy2}), and equation (\ref{partial}) in Proposition xx  %we have that
%\begin{equation*}
%g(t,a)=\frac{a}{c(s-t)^{3/2}}\exp\left\{-\frac{a^2}{2(s-t)}\right\}v(t,a),
%\end{equation*}
%from which we may derive the boundary condition
%\begin{equation}\label{bound}
%g(s,a)=0.
%\end{equation}
%We will solve the Cauchy problem defined in equations (\ref{schro}) and (\ref{bound}) by first %constructing the necessary Green's function. Our function turns out to be a direct consequence of the %following Theorem which follows from translating Schr\"odinger's function into the heat equation.
%\\[0.3cm]
\begin{theorem}
Given the  backward Kolmogorov equation in (\ref{schro}):
\begin{equation}\label{kom1}
-\frac{\partial \varphi^2}{\partial t}(t,a)=\frac{1}{2}\frac{\partial^2 \varphi^2}{\partial a^2}(t,a)-af''(t)\varphi^2(t,a)
\end{equation}
%and its dual, the forward Kolmogorov equation
%\begin{equation}\label{kom2}
%\frac{\partial \tilde{g}}{\partial \tau}(\tau,b)=\frac{1}{2}\frac{\partial^2 \tilde{g}}{\partial b^2}(\tau,b)-%bf''(\tau)\tilde{g}(\tau,b)
%\end{equation}
we have the following {\it fundamental\/} solution,% correspondingly
\begin{eqnarray}
\label{sol1}\varphi^2(t,a)&=&\frac{\Im}{\sqrt{2\pi t}}\exp\left\{\frac{(a-f(t))^2}{2t}-\frac{1}{2}\int_0^t(f'(u))^2du+f'(t)a\right\},
%\label{sol2}\tilde{g}(\tau,b)&=&\frac{1}{\sqrt{2\pi \tau}}\exp\left\{-\frac{(b-f(\tau))^2}{2\tau}+\frac{1}{2}\int_0^\tau(f'(u))^2du-f'(\tau)b\right\}
\end{eqnarray}
where $\Im=\sqrt{-1}$.
\end{theorem}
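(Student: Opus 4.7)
The strategy is the standard one for Schr\"odinger operators with a (time-dependent) linear potential: conjugate away the potential by an exponential gauge transformation, reduce to a pure backward heat equation, and then pull back an explicit solution of that reduced equation. Concretely I would start from the ansatz
\[
\varphi^2(t,a) \;=\; \exp\{\alpha(t)+\beta(t)\,a\}\; w\bigl(t,\,a+\gamma(t)\bigr),
\]
with $\alpha,\beta,\gamma$ scalar functions to be determined, and plug it into (\ref{kom1}). After computing $\varphi^2_t$, $\varphi^2_a$, $\varphi^2_{aa}$ and dividing through by the exponential prefactor, the resulting equation organizes itself into four groups, carrying respectively $w_{aa}$, $w_a$, $aw$, and $w$.

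Demanding that the remaining equation for $w$ be the plain backward heat equation forces three decoupled ODEs
\[
\beta'(t)=f''(t),\qquad \gamma'(t)+\beta(t)=0,\qquad \alpha'(t)+\tfrac{1}{2}\beta(t)^2=0,
\]
arising from the coefficients of $aw$, $w_a$ and $w$ respectively. Solving with the natural integration constants gives
\[
\beta(t)=f'(t),\qquad \gamma(t)=-f(t),\qquad \alpha(t)=-\tfrac{1}{2}\int_0^t (f'(u))^2\,du,
\]
so that the new unknown $w(t,y)$, evaluated at $y=a-f(t)$, satisfies
\[
-\frac{\partial w}{\partial t}(t,y)\;=\;\frac{1}{2}\,\frac{\partial^2 w}{\partial y^2}(t,y).
\]

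To finish, I would exhibit the particular solution of the backward heat equation obtained by analytic continuation of the usual heat kernel, $w(t,y)=\Im\,(2\pi t)^{-1/2}\exp\{y^2/(2t)\}$, the factor $\Im$ coming from $\sqrt{-2\pi t}=\Im\sqrt{2\pi t}$ for $t>0$; a direct calculation of $w_t$ and $w_{yy}$ confirms $-w_t=\tfrac{1}{2}w_{yy}$. Substituting $y=a-f(t)$ and multiplying back by $\exp\{\alpha(t)+\beta(t)a\}$ reproduces (\ref{sol1}) verbatim. The only real issue is the coefficient bookkeeping in the gauge step: the decomposition into pieces proportional to $1$, $a$, $w_a$, $w_{aa}$ must be performed cleanly so that the three ODEs decouple, and the additive integration constants in $\alpha,\beta,\gamma$ must be fixed to match the normalization of (\ref{sol1}). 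The imaginary unit $\Im$ is a purely formal normalization; in the construction of the Green's function $G$ in Section \ref{gen}, combining (\ref{sol1}) with its forward counterpart from Section \ref{ff2} produces the real, nonnegative $G$ required by Definition \ref{def2}.
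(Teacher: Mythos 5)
Your proposal is correct and follows essentially the same route as the paper: the paper performs the identical reduction in three sequential steps (the gauge factor $e^{\beta(t)a}$ with $\beta_t=f''$, the translation $y=a-v(t)$ with $v_t=\beta$, and the integrating factor $e^{\int_0^t\frac{1}{2}\beta^2(u)\,du}$), which are precisely the three decoupled ODEs you package into a single ansatz. Both arguments then reduce to the backward heat equation and pull back the analytically continued kernel $\Im\,(2\pi t)^{-1/2}\exp\{y^2/(2t)\}$ to obtain (\ref{sol1}).
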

\noindent{\it Proof of (\ref{sol1}).\/} First set $\varphi^2(t,a)=\lambda(t,a)e^{\beta(t)a}$, where $\beta$ will be determined later and compute
\begin{eqnarray*}
\frac{\partial \varphi^2}{\partial t}(t,a)&=&a\beta_t(t)\lambda(t,a)e^{\beta(t)a}+\frac{\partial\lambda}{\partial t}(t,a)e^{\beta(t)a}\\
\frac{\partial \varphi^2}{\partial a}(t,a)&=&\beta(t)\lambda(t,a)e^{\beta(t)a}+\frac{\partial\lambda}{\partial a}(t,a)e^{\beta(t)a}\\
\frac{\partial^2\varphi^2}{\partial a^2}(t,a)&=&\beta^2(t)\lambda(t,a)e^{\beta(t)a}+\beta(t)\frac{\partial\lambda}{\partial a}(t,a)e^{\beta(t)a}\\
&&+\frac{\partial^2\lambda}{\partial a^2}(t,a)e^{\beta(t)a}+\beta(t)\frac{\partial\lambda}{\partial a}e^{\beta(t)a}\\
&=&\beta^2(t)\lambda(s,a)e^{\beta(t)a}+2\beta(t)\frac{\partial\lambda}{\partial a}(t,a)e^{\beta(t)a}\\
&&+\frac{\partial^2\lambda}{\partial a^2}(t,a)e^{\beta(t)a}
\end{eqnarray*}
next, substitute into equation (\ref{kom1})
\begin{eqnarray*}
-\frac{\partial\lambda}{\partial t}(t,a)&=&\frac{1}{2}\frac{\partial^2\lambda}{\partial a^2}(t,a)+\frac{1}{2}\beta^2(t)\lambda(t,a)+\beta(t)\frac{\partial\lambda}{\partial a}(t,a)\\
&&+a(\beta_t(t)-f''(t))\lambda(t,a)
\end{eqnarray*}
Next, if we let $y=a-v(t)$ and set $\lambda(t,a)=u(t,y)$, then
\begin{eqnarray*}
-\frac{\partial u}{\partial t}(t,y)+v_t(t)\frac{\partial u}{\partial y}(t,y)&=&\frac{1}{2}\frac{\partial^2u}{\partial y^2}(t,y)+\frac{1}{2}\beta^2(t)u(t,y)+\beta(t)\frac{\partial u}{\partial y}(t,y)\\
&&+(y+v(t))(\beta_t(t)-f''(t))\lambda(t,y)
\end{eqnarray*}
to delete the term
\begin{eqnarray*}
\frac{\partial u}{\partial y}(t,y)
\end{eqnarray*}
set $v_t(t)=\beta(t)$, hence
\begin{eqnarray*}
-\frac{\partial u}{\partial t}(t,y)&=&\frac{1}{2}\frac{\partial^2u}{\partial y^2}(t,y)+\frac{1}{2}\beta^2(t)u(t,y)\\
&&+(y+v(t))(\beta_t(t)-f''(t))\lambda(t,y)
\end{eqnarray*}
furthermore, setting $\beta_t(t)=f''(t)$ we get
\begin{eqnarray*}
-\frac{\partial u}{\partial t}(t,y)&=&\frac{1}{2}\frac{\partial^2u}{\partial y^2}(t,y)+\frac{1}{2}\beta^2(t)u(t,y)
\end{eqnarray*}
which is a parabolic differential equation which now may be transformed into the heat equation by letting
$G(u)=1/2\beta^2(u)$ and
\begin{equation*}
\varphi^3(t,y)=u(t,y)e^{\int_0^tG(u)du}
\end{equation*}
it follows that
\begin{equation*}
-\frac{\partial \varphi^3}{\partial t}(t,y)=\frac{1}{2}\frac{\partial^2\varphi^3}{\partial y^2}(t,y)
\end{equation*}
which has general solution equal to:
\begin{equation*}
\varphi^3(t,y)=\frac{\Im}{\sqrt{2\pi t}}\exp\left\{\frac{y^2}{2t}\right\}
\end{equation*}
i.e.,
\begin{eqnarray*}
u(t,y)&=&=e^{-\int_0^tG(u)du}\varphi^3(t,y)\\
&=&e^{-\frac{1}{2}\int_0^t(f'(u))^2du}\varphi^3(t,y)\\
&=&\frac{\Im}{\sqrt{2\pi t}}\exp\left\{\frac{y^2}{2t}-\frac{1}{2}\int_0^t(f'(u))^2du\right\}
\end{eqnarray*}
and
\begin{eqnarray*}
\lambda(t,a)&=&\frac{\Im}{\sqrt{2\pi t}}\exp\left\{\frac{(a-v(t))^2}{2t}-\frac{1}{2}\int_0^t(f'(u))^2du\right\}\\
&=&\frac{\Im}{\sqrt{2\pi t}}\exp\left\{\frac{(a-f(t))^2}{2t}-\frac{1}{2}\int_0^t(f'(u))^2du\right\}.
\end{eqnarray*}
Hence
\begin{eqnarray*}
\varphi^2(t,a)&=&\frac{\Im}{\sqrt{2\pi t}}\exp\left\{\frac{(a-f(t))^2}{2t}-\frac{1}{2}\int_0^t(f'(u))^2du+\beta(t)a\right\}\\
&=&\frac{\Im}{\sqrt{2\pi t}}\exp\left\{\frac{(a-f(t))^2}{2t}-\frac{1}{2}\int_0^t(f'(u))^2du+f'(t)a\right\}
\end{eqnarray*}

%\noindent{\bf Proof\/.}

%\noindent{\bf Corollary.} The distribution is given by
%\begin{eqnarray*}
%\varphi_f(s)=\frac{1}{\sqrt{2\pi s}}\exp\left\{-\frac{(a-f(s))^2}{2s}-f'(s)a\right\}
%\end{eqnarray*}
%\noindent{\bf Corollary.} Since $g(0,a)=0$ from Duhamel's Principle it follows that
%\begin{equation*}
%g(s,a)=\int_0^s\tilde{g}(a,s-u)\delta(u)du
%\end{equation*}
%where
%\begin{eqnarray*}
%\delta(u)&=&\frac{\partial g}{\partial a}(u,0)\\
%&=&\frac{1}{\sqrt{2\pi u^3}}
%\end{eqnarray*}

\section{Schr\"odinger's Forward  equation}\label{ff2}
From equation (\ref{cauchy1}) we will now study its dual, the Fokker-Planck equation:
\begin{eqnarray*}
\frac{\partial \psi(\tau,b)}{\partial \tau}(\tau,b)+f''_k(\tau)b\psi(\tau,b)&=&\frac{1}{2}\frac{\partial^2\psi}{\partial b^2}(\tau,b)\\
&&-\frac{\partial}{\partial b}\left[\left(\frac{1}{b}-\frac{b}{s-\tau}\right)\psi(\tau,b)\right].%\\
%\frac{\partial \tilde{v}(\tau,b)}{\partial \tau}(\tau,b)+f''_k(\tau)b\tilde{v}(\tau,b)&=&\frac{1}{2}\frac{\partial^2\tilde{v}}{\partial b^2}(\tau,b).
%&&-\left(\frac{1}{b}-\frac{b}{s-\tau}\right)\frac{\partial\tilde{v}}{\partial b}(\tau,b)\\
%&&+\left(\frac{1}{b^2}+\frac{1}{s-\tau}\right)\tilde{v}(\tau,b)
\end{eqnarray*}
(The techniques used in this section are equivalent to those used when treating the backward-equation, with the obvious modifications.)
That is
\begin{eqnarray}
\label{f1}\frac{\partial \psi(\tau,b)}{\partial \tau}(\tau,b)&=&\frac{1}{2}\frac{\partial^2\psi}{\partial b^2}(\tau,b)\\
\nonumber&&-\left(\frac{1}{b}-\frac{b}{s-\tau}\right)\frac{\partial\psi}{\partial b}(\tau,b)\\
\nonumber&&+\left(\frac{1}{b^2}+\frac{1}{s-\tau}-f''_k(\tau)b\right)\psi(\tau,b)
\end{eqnarray}
\begin{proposition} Equation (\ref{f1}) satisfies the following relationship
%\noindent{\bf Proposition.}
\begin{equation}\label{partial2}
\psi(\tau,b)=\psi^1(\tau,b)\left\{A(\tau)\exp\left[B(\tau)b^2\right]\right\},
\end{equation}
where
\begin{eqnarray}
\nonumber B(\tau)=-\frac{1}{2(s-\tau)}\label{b}&&\\
\nonumber A(\tau)=c\cdot(s-\tau)^{-1/2}\label{a}&&\\
\nonumber \left(bf''(\tau)-\frac{1}{b^2}-\frac{1}{s-\tau}\right)\psi^1(\tau,b)-\frac{1}{2}\psi^1_{bb}(\tau,b)+\frac{1}{b}\psi^1_b(\tau,b)&&\\
+\psi^1_\tau(\tau,b)=0&&\label{p2}
\end{eqnarray}
\end{proposition}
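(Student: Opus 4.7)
The plan is to mimic the argument used for the backward Cauchy problem: substitute the ansatz $\psi(\tau,b)=\psi^{1}(\tau,b)\,A(\tau)\exp[B(\tau)b^{2}]$ into (\ref{f1}), expand all derivatives, and then match terms so that every power of $b$ that is not already present in the target equation (\ref{p2}) is forced to vanish. This produces two ODEs (one from the coefficient of $b^{2}$, one from the coefficients that are free of $\psi^{1}$ and its derivatives), which pin down $B(\tau)$ and $A(\tau)$; the leftover equation for $\psi^{1}$ must then agree with (\ref{p2}).

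First I would compute
\[
\psi_\tau = \bigl(\psi^{1}_\tau + b^{2}B'(\tau)\psi^{1}\bigr)A e^{Bb^{2}} + \psi^{1}A'e^{Bb^{2}},
\]
\[
\psi_b = \bigl(\psi^{1}_b + 2bB(\tau)\psi^{1}\bigr)Ae^{Bb^{2}},
\]
\[
\psi_{bb} = \bigl(\psi^{1}_{bb} + 4bB\psi^{1}_b + (4b^{2}B^{2}+2B)\psi^{1}\bigr)Ae^{Bb^{2}},
\]
and substitute into (\ref{f1}). Dividing through by $Ae^{Bb^{2}}$ and gathering terms according to whether they depend on $\psi^{1}_{b}$, $\psi^{1}_{bb}$, on $b^{2}$ alone, or on $b^{0}$ alone, I would sort the resulting identity into four groups: a coefficient of $b^{2}$ proportional to $B'(\tau)+2B(\tau)^{2}+2B(\tau)/(s-\tau)$; a coefficient of $b\cdot\psi^{1}_b$ proportional to $2B(\tau)+1/(s-\tau)$; a term $A'(\tau)/A(\tau)+B(\tau)+1/(s-\tau)$ free of $\psi^{1}$-derivatives; and finally the remaining operator acting on $\psi^{1}$, which should reproduce (\ref{p2}).

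Next I would kill the $b\,\psi^{1}_{b}$ cross term by imposing $2B(\tau)+1/(s-\tau)=0$, giving $B(\tau)=-1/[2(s-\tau)]$. A direct check confirms that this $B$ also solves the Riccati-type condition from the $b^{2}$ coefficient, since $B'(\tau)=-1/[2(s-\tau)^{2}]$ and $2B^{2}+2B/(s-\tau)=1/[2(s-\tau)^{2}]-1/(s-\tau)^{2}=-1/[2(s-\tau)^{2}]$. Finally, the $b^{0}$ group forces $A'/A = -B - 1/(s-\tau) = -1/[2(s-\tau)]$, which integrates to $A(\tau)=c(s-\tau)^{-1/2}$.

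With the cross terms and the $b^{2}$ and constant pieces eliminated, what remains is precisely the claimed equation (\ref{p2}) for $\psi^{1}$. The main obstacle (as in the backward case) is purely the sign bookkeeping: because we are dealing with the Fokker--Planck form, the drift enters with a minus sign and there are extra contributions from differentiating the coefficient $1/b - b/(s-\tau)$ in $b$, which is why $B$ and $A$ carry opposite signs (and a different power of $(s-\tau)$) compared with the backward proposition. Once those signs are tracked consistently the calculation is mechanical.
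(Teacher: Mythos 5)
Your strategy is exactly the paper's: substitute $\psi=\psi^{1}A(\tau)e^{B(\tau)b^{2}}$ into (\ref{f1}), divide out $Ae^{Bb^{2}}$, group by powers of $b$ and by derivatives of $\psi^{1}$, kill the $b\,\psi^{1}_{b}$ and $b^{2}$ coefficients to fix $B$, and read off the $A$ equation and the residual PDE (\ref{p2}). The derivatives you write down are correct and the condition $2B+1/(s-\tau)=0$ is right. However, your bookkeeping of the $b$-independent and $b^{2}$ groups is off in a way that matters. The correct $b^{2}$ condition is $B'-2B^{2}-2B/(s-\tau)=0$ (your verification actually checks this identity, since you show $B'$ and $2B^{2}+2B/(s-\tau)$ are both equal to $-1/[2(s-\tau)^{2}]$, but the condition as you state it, $B'+2B^{2}+2B/(s-\tau)=0$, is false for this $B$).

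More seriously, the $b^{0}$ group is $A'/A+B$, not $A'/A+B+1/(s-\tau)$: the only constant contributions come from $\tfrac12\cdot 2B\psi^{1}$ (out of $\psi_{bb}$) and $-\tfrac1b\cdot 2bB\psi^{1}$ (out of the drift term), which net to $-B$ on the right side. The $+\frac{1}{s-\tau}\psi$ term of (\ref{f1}) is retained inside the residual equation (\ref{p2}) as the $-\frac{1}{s-\tau}\psi^{1}$ piece; by also putting it into the $A$ equation you count it twice, so your stated conditions cannot simultaneously yield the claimed $A$ and the claimed (\ref{p2}). Indeed $A'/A=-1/[2(s-\tau)]$ integrates to $A=c(s-\tau)^{+1/2}$ (note $\int d\tau/(s-\tau)=-\ln(s-\tau)$), not to $c(s-\tau)^{-1/2}$; you land on the right power only because a second sign slip in the integration cancels the first. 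The correct chain is $A'/A=-B=+1/[2(s-\tau)]$, whence $A=c(s-\tau)^{-1/2}$, with the $-\frac{1}{s-\tau}\psi^{1}$ term left inside (\ref{p2}). With these two sign corrections your argument coincides with the paper's proof.
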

\noindent{\it Proof\/}. Let us first start by analyzing the transformation in equation (\ref{p2}):
\begin{eqnarray*}
\frac{\partial \psi}{\partial b}(\tau,b)&=&\psi^1_b(\tau,b)A(\tau)\exp\left[B(\tau)b^2\right]\\&&+2b\psi^1(\tau,b)A(\tau)B(\tau)\exp\left[B(\tau)b^2\right]\\
\frac{\partial^2 \psi}{\partial b^2}(\tau,b)&=&\psi^1_{bb}(\tau,b)A(\tau)\exp\left[B(\tau)b^2\right]\\&&+2b\psi^1_b(\tau,b)A(\tau)B(\tau)\exp\left(B(\tau)b^2\right)\\
&&+2\psi^1(\tau,b)A(\tau)B(\tau)\exp\left[B(\tau)b^2\right]\\&&+2b\psi^1_bA(\tau)B(\tau)\exp\left[B(\tau)b^2\right]\\
&&+4b^2\psi^1(\tau,b)A(\tau)B^2(\tau)\exp\left[B(\tau)b^2\right]\\
\frac{\partial \psi}{\partial \tau}(\tau,b)&=&\psi^1(\tau,b)A_\tau(\tau)\exp\left[B(\tau)b^2\right]\\&&+b^2\psi^1(\tau,b)A(\tau)B_\tau(\tau)\exp\left[B(\tau)b^2\right]\\
&&+\psi^1_\tau(\tau,b) A(\tau)\exp\left[B(\tau)b^2\right].
\end{eqnarray*}
Substituting these equations into (\ref{f1}) we get
\begin{eqnarray*}
&&\frac{A_\tau(\tau)}{A(\tau)}+b^2B_\tau(\tau)+\frac{\psi^1_\tau}{\psi^1}+\left(b f''(\tau)-\frac{1}{b^2}-\frac{1}{s-\tau}\right)\\
&&\qquad=\frac{1}{2}\frac{\psi^1_{bb}}{\psi^1}+b\frac{\psi^1_b}{\psi^1}B(\tau)+B(\tau) +b\frac{\psi^1_b}{\psi^1}B(\tau)+2b^2B^2(\tau)\\&&\enskip\qquad -\left(\frac{1}{b}-\frac{b}{s-\tau}\right)\frac{\psi^1_b}{\psi^1}
-\left(\frac{1}{b}-\frac{b}{s-\tau}\right)2bB(\tau),
\end{eqnarray*}
equating to zero we have
\begin{eqnarray*}
&&\frac{A_\tau(\tau)}{A(\tau)}+b^2B_\tau(\tau)+\frac{\psi^1_\tau}{\psi^1}+\left(b f''(\tau)-\frac{1}{b^2}-\frac{1}{s-\tau}\right)\\
&&\qquad-\frac{1}{2}\frac{\psi_{bb}}{\psi^1}-2b\frac{\psi^1_b}{\psi^1}B(\tau)-B(\tau) -2b^2B^2(\tau)\\
&&\qquad +\left(\frac{1}{b}-\frac{b}{s-\tau}\right)\frac{\psi^1_b}{\psi^1}
+\left(\frac{1}{b}-\frac{b}{s-\tau}\right)2bB(\tau)=0,
\end{eqnarray*}
finally, we factorize
\begin{eqnarray*}
&&\left(\frac{A_\tau(\tau)}{A(\tau)}+B(\tau)\right)+b^2\left(B_\tau(\tau)-2B^2(\tau)-2\frac{1}{s-\tau}B(\tau)\right)\\
&&-b\frac{\psi^1_b}{\psi^1}\left(2B(\tau)+\frac{1}{s-\tau}\right)+\left(bf''(\tau)-\frac{1}{b^2}-\frac{1}{s-\tau}\right)\\
&&-\frac{1}{2}
\frac{\psi^1_{bb}}{\psi^1}+\frac{1}{b}\frac{\psi^1_b}{\psi^1}+\frac{\psi^1_\tau}{\psi^1}=0.
\end{eqnarray*}
This equality holds for every $b>0$ if and only if
\begin{eqnarray}
\nonumber B(\tau)=-\frac{1}{2(s-\tau)}&&\\
\nonumber A(\tau)=c\cdot(s-\tau)^{-1/2}\label{a}&&\\
\nonumber \left(bf''(\tau)-\frac{1}{b^2}-\frac{1}{s-\tau}\right)\psi^1(\tau,b)-\frac{1}{2}\psi^1_{bb}(\tau,b)+\frac{1}{b}\psi^1_b(\tau,b)&&\\
\nonumber +\psi^1_\tau(\tau,a)=0&&
\end{eqnarray}
as claimed. $\Box$
\subsection{The Schr\"odinger's forward equation}
We first start with the following transformation  $\psi^1=b\cdot \psi^2$, {\it i.e.\/}
\begin{eqnarray*}
\psi^1_b&=&\psi^2+b\cdot \psi^2_b\\
\psi^1_{bb}&=&2\psi^2_b+b\psi^2_{bb}\\
\psi^1_\tau&=&b\psi^2_\tau
\end{eqnarray*}
which after subtitution in (\ref{p2}) equals
\begin{eqnarray}
\nonumber b^2f''(\tau)\psi^2-\frac{1}{b}\psi^2-\frac{b}{s-\tau}\psi^2-\psi^2_b-\frac{1}{2}b\psi^2_{bb}+\frac{1}{b}\psi^2+\psi^2_b+b\psi^2_\tau&=&0\\
\nonumber b^2f''(\tau)\psi^2-\frac{b}{s-\tau}\psi^2-\frac{1}{2}b\psi^2_{bb}+b\psi^2_\tau&=&0\\
\label{2}\left(bf''(\tau)-\frac{1}{s-\tau}\right)\psi^2-\frac{1}{2}\psi^2_{bb}+\psi^2_\tau&=&0.
\end{eqnarray}
Next, setting $\psi^2$ equal to:
\begin{equation*}
 \psi^2=\frac{1}{s-\tau}\tilde{\psi}^2,
 \end{equation*}
 we have that
\begin{eqnarray*}
\psi^2_\tau&=&\frac{1}{(s-\tau)^2}\tilde{\psi}^2+\frac{1}{s-\tau}\tilde{\psi}^2_\tau\\
\psi^2_b&=&\frac{1}{s-\tau}\tilde{\psi}^2_b\\
\psi^2_{bb}&=&\frac{1}{s-\tau}\tilde{\psi}^2_{bb}.
\end{eqnarray*}
Thus subtituting into (\ref{p2})
\begin{eqnarray}
\nonumber\frac{bf''(\tau)}{s-\tau}\tilde{\psi}^2-\frac{1}{(s-\tau)^2}\tilde{\psi}^2-\frac{1}{2}\frac{1}{s-\tau}\tilde{\psi}^2_{bb}+\frac{1}{(s-\tau)^2}\tilde{\psi}^2+\frac{1}{s-\tau}\tilde{\psi}^2_\tau&=&0\\
\nonumber\frac{bf''(\tau)}{s-\tau}\tilde{\psi}^2-\frac{1}{2}\frac{1}{s-\tau}\tilde{\psi}^2_{bb}+\frac{1}{s-\tau}\tilde{\psi}^2_\tau&=&0\\
\label{al1}bf''(\tau)\tilde{\psi}^2-\frac{1}{2}\tilde{\psi}^2_{bb}+\tilde{\psi}^2_\tau&=&0
\end{eqnarray}
which is Schr\"odinger's forward equation.
\begin{theorem}
%\noindent{\bf Theorem.}
%Given the  backward Kolmogorov equation in (\ref{schro}):
%\begin{equation}\label{kom1}
%-\frac{\partial g}{\partial t}(t,a)=\frac{1}{2}\frac{\partial^2 g}{\partial a^2}(t,a)-af''(t)g(t,a)
%\end{equation}
Given the forward equation as in (\ref{al1})
\begin{equation}\label{kom2}
\frac{\partial \tilde{\psi}^2}{\partial \tau}(\tau,b)=\frac{1}{2}\frac{\partial^2 \tilde{\psi}^2}{\partial b^2}(\tau,b)-bf''(\tau)\tilde{\psi}^2(\tau,b)
\end{equation}
we have the following {\it fundamental\/} solution %s, correspondingly
\begin{eqnarray}
%\label{sol1}g(t,a)&=&\frac{\Im}{\sqrt{2\pi t}}\exp\left\{\frac{(a-f(t))^2}{2t}-\frac{1}{2}\int_0^t(f'(u))^2du+f'(t)a\right\}\\
\label{sol2}\tilde{\psi}^2(\tau,b)&=&\frac{1}{\sqrt{2\pi \tau}}\exp\left\{-\frac{(b-f(\tau))^2}{2\tau}+\frac{1}{2}\int_0^\tau(f'(u))^2du-f'(\tau)b\right\}
\end{eqnarray}
\end{theorem}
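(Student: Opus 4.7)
The plan is to mirror the three-stage reduction carried out in Section~\ref{sec2} for the backward Schr\"odinger equation, with signs adjusted for the fact that (\ref{kom2}) is forward in time. The idea is to perform successive changes of dependent and independent variable that kill, in turn, the linear-in-$b$ potential $-bf''(\tau)\tilde{\psi}^2$, the first-order term in $b$ it induces, and a residual $\tau$-dependent but $b$-free multiplier, leaving the standard forward heat equation whose Gaussian fundamental solution is classical. Unwinding the substitutions will then yield (\ref{sol2}).

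More concretely, first set $\tilde{\psi}^2(\tau,b)=\mu(\tau,b)e^{\beta(\tau)b}$. Inserting into (\ref{kom2}) and collecting powers of $b$, the term proportional to $b\mu$ carries coefficient $-(\beta_\tau(\tau)+f''(\tau))$, so choosing $\beta_\tau=-f''(\tau)$, i.e.\ $\beta(\tau)=-f'(\tau)$, annihilates the linear potential. The resulting equation for $\mu$ reads
\begin{equation*}
\mu_\tau = \tfrac{1}{2}\mu_{bb} - f'(\tau)\mu_b + \tfrac{1}{2}(f'(\tau))^2\mu.
\end{equation*}
Next, perform the shift $z=b-v(\tau)$ with $\mu(\tau,b)=u(\tau,z)$; this turns $\mu_\tau$ into $u_\tau - v_\tau u_z$ while leaving $\mu_b=u_z$ and $\mu_{bb}=u_{zz}$, so the coefficient of $u_z$ in the new equation becomes $v_\tau(\tau)-f'(\tau)$. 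Taking $v(\tau)=f(\tau)$ removes this first-derivative term and reduces the PDE to $u_\tau = \tfrac{1}{2}u_{zz} + \tfrac{1}{2}(f'(\tau))^2 u$. Finally introduce
\begin{equation*}
\tilde{\psi}^3(\tau,z):=u(\tau,z)\exp\left\{-\tfrac{1}{2}\int_0^\tau(f'(s))^2\,ds\right\},
\end{equation*}
which satisfies the forward heat equation $\tilde{\psi}^3_\tau = \tfrac{1}{2}\tilde{\psi}^3_{zz}$ and whose fundamental solution is the real Gaussian kernel $(2\pi\tau)^{-1/2}\exp\{-z^2/(2\tau)\}$. Back-substituting $z=b-f(\tau)$, undoing the integrating factor, and multiplying by $e^{-f'(\tau)b}$ then reproduces formula (\ref{sol2}).

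The computation is essentially routine once the three substitutions are identified; the only real subtlety, rather than a genuine obstacle, is bookkeeping of signs. Because $\tau$-derivatives enter with the opposite sign from the backward case, the heat equation at the end is truly forward: its Gaussian kernel is real (no factor of $\Im$), the exponent of $(b-f(\tau))^2$ carries a minus sign, and correspondingly $\beta(\tau)=-f'(\tau)$ rather than $+f'(t)$. These sign flips propagate consistently precisely because (\ref{kom2}) is the formal time-reversal of (\ref{kom1}).
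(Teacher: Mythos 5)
Your proposal is correct and follows essentially the same route as the paper: the same exponential tilt in $b$, the shift $z=b-f(\tau)$, and the integrating factor $\exp\{-\frac{1}{2}\int_0^\tau (f'(u))^2du\}$ reducing (\ref{kom2}) to the forward heat equation, then unwinding. The only difference is the sign convention on $\beta$ (you write $e^{+\beta(\tau)b}$ with $\beta=-f'$, the paper writes $e^{-\beta(\tau)b}$ with $\beta=f'$), which yields the identical multiplier $e^{-f'(\tau)b}$.
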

\noindent{\it Proof of (\ref{sol2})\/}.
First set $\tilde{\psi}^2(\tau,b)=\lambda(\tau,b)e^{-\beta(\tau)b}$, where $\beta$ will be determined later and compute
\begin{eqnarray*}
\frac{\partial \tilde{\psi}^2}{\partial \tau}(\tau,b)&=&-b\beta_\tau(\tau)\lambda(\tau,b)e^{-\beta(\tau)b}+\frac{\partial\lambda}{\partial \tau}(\tau,b)e^{-\beta(\tau)b}\\
\frac{\partial \tilde{\psi}^2}{\partial b}(\tau,b)&=&-\beta(\tau)\lambda(\tau,b)e^{-\beta(\tau)b}+\frac{\partial\lambda}{\partial b}(\tau,b)e^{-\beta(\tau)b}\\
\frac{\partial^2\tilde{\psi}^2}{\partial b^2}(\tau,b)&=&\beta^2(\tau)\lambda(\tau,b)e^{-\beta(\tau)b}-\beta(\tau)\frac{\partial\lambda}{\partial b}(\tau,b)e^{-\beta(\tau)b}\\
&&+\frac{\partial^2\lambda}{\partial a^2}(\tau,b)e^{-\beta(\tau)b}-\beta(\tau)\frac{\partial\lambda}{\partial b}e^{-\beta(\tau)b}\\
&=&\beta^2(\tau)\lambda(\tau,b)e^{-\beta(\tau)b}-2\beta(\tau)\frac{\partial\lambda}{\partial b}(\tau,b)e^{-\beta(\tau)b}\\
&&+\frac{\partial^2\lambda}{\partial b^2}(\tau,b)e^{-\beta(\tau)b}
\end{eqnarray*}
next, substitute into equation (\ref{kom2})
\begin{eqnarray*}
\frac{\partial\lambda}{\partial \tau}(\tau,b)&=&\frac{1}{2}\frac{\partial^2\lambda}{\partial b^2}(\tau,b)+\frac{1}{2}\beta^2(\tau)\lambda(\tau,b)-\beta(\tau)\frac{\partial\lambda}{\partial b}(\tau,b)\\
&&+b(\beta_\tau(\tau)-f''(\tau))\lambda(\tau,b)
\end{eqnarray*}
Next, if we let $z=b-v(\tau)$ and set $\lambda(\tau,b)=u(\tau,z)$, then
\begin{eqnarray*}
\frac{\partial u}{\partial \tau}(\tau,z)-v_\tau(\tau)\frac{\partial u}{\partial y}(\tau,z)&=&\frac{1}{2}\frac{\partial^2u}{\partial z^2}(\tau,z)+\frac{1}{2}\beta^2(\tau)u(\tau,z)-\beta(\tau)\frac{\partial u}{\partial z}(\tau,z)\\
&&+(z+v(\tau))(\beta_\tau(\tau)-f''(\tau))u(\tau,z)
\end{eqnarray*}
to delete the term
\begin{eqnarray*}
\frac{\partial u}{\partial z}(\tau,z)
\end{eqnarray*}
set $v_\tau(\tau)=\beta(\tau)$, hence
\begin{eqnarray*}
\frac{\partial u}{\partial \tau}(\tau,z)&=&\frac{1}{2}\frac{\partial^2u}{\partial z^2}(\tau,z)+\frac{1}{2}\beta^2(\tau)u(\tau,z)\\
&&+(z+v(\tau))(\beta_\tau(\tau)-f''(\tau))u(\tau,z)
\end{eqnarray*}
furthermore, setting $\beta_\tau(\tau)=f''(\tau)$ we get
\begin{eqnarray*}
\frac{\partial u}{\partial \tau}(\tau,z)&=&\frac{1}{2}\frac{\partial^2u}{\partial z^2}(\tau,z)+\frac{1}{2}\beta^2(\tau)u(\tau,z)
\end{eqnarray*}
which is a parabolic differential equation which now may be transformed into the heat equation by letting
$G(u)=1/2\beta^2(u)$ and
\begin{equation*}
\psi^3(\tau,z)=u(\tau,z)e^{-\int_0^\tau G(u)du}
\end{equation*}
it follows that
\begin{equation*}
\frac{\partial \psi^3}{\partial \tau}(\tau,z)=\frac{1}{2}\frac{\partial^2\psi^3}{\partial z^2}(\tau,z)
\end{equation*}
whose solution is
\begin{equation*}
\psi^3(\tau,z)=\frac{1}{\sqrt{2\pi \tau}}\exp\left\{-\frac{z^2}{2\tau}\right\}
\end{equation*}
{\it i.e.\/},
\begin{eqnarray*}
u(\tau,z)&=&=e^{\int_0^\tau G(u)du}\psi^3(\tau,z)\\
&=&e^{\frac{1}{2}\int_0^\tau(f'(u))^2du}\psi^3(\tau,z)\\
&=&\frac{1}{\sqrt{2\pi \tau}}\exp\left\{-\frac{y^2}{2\tau}+\frac{1}{2}\int_0^\tau(f'(u))^2du\right\}
\end{eqnarray*}
and
\begin{eqnarray*}
\lambda(\tau,b)&=&\frac{1}{\sqrt{2\pi \tau}}\exp\left\{-\frac{(b-v(\tau))^2}{2\tau}+\frac{1}{2}\int_0^\tau(f'(u))^2du\right\}\\
&=&\frac{1}{\sqrt{2\pi \tau}}\exp\left\{-\frac{(b-f(\tau))^2}{2\tau}+\frac{1}{2}\int_0^\tau(f'(u))^2du\right\}.
\end{eqnarray*}
Hence
\begin{eqnarray*}
\tilde{\psi}^2(\tau,b)&=&\frac{1}{\sqrt{2\pi \tau}}\exp\left\{-\frac{(b-f(\tau))^2}{2\tau}+\frac{1}{2}\int_0^\tau(f'(u))^2du-\beta(\tau)b\right\}\\
&=&\frac{1}{\sqrt{2\pi \tau}}\exp\left\{-\frac{(b-f(\tau))^2}{2\tau}+\frac{1}{2}\int_0^\tau(f'(u))^2du-f'(\tau)b\right\}
\end{eqnarray*}
\section{General solution}\label{gen}
From the backward (\ref{sol1}) and forward (\ref{sol2}) solutions of Schr\"odinger's equation, and the fact that $a,b\in \mathbb{R}^+\backslash\{0\}$ we have
\begin{proposition} Schr\"odinger's equation, see equations (\ref{kom1}) and (\ref{kom2}), with time-dependent linear potential and such that $a,b\in\mathbb{R}^+\backslash\{0\}$ has the following Green's function
\begin{eqnarray}
\label{g1}&&H(t,a;\tau,b)=\\
\nonumber&&\enskip\frac{1}{\sqrt{2\pi(\tau-t)}}\exp\left\{\frac{1}{2}\int_t^\tau(f'(u))^2du-f'(\tau)b+f'(t)a\right\}\\
\nonumber&&\times \enskip\Bigg{[}\exp\left\{-\frac{\left(b-a-\int_t^\tau f'(u)du\right)^2}{2(\tau-t)}\right\}
-\exp\left\{-\frac{\left(b+a-\int_t^\tau f'(u)du\right)^2}{2(\tau-t)}\right\}\Bigg{]},
\end{eqnarray}
where $0\leq t<\tau\leq s$.
\end{proposition}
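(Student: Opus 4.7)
The plan is to construct $H$ by composing the backward and forward gauge reductions of Sections~\ref{sec2} and \ref{ff2} with the classical method-of-images Green's function for the heat equation on a half-line. Recall from the derivation of (\ref{sol1}) that the substitution
\[
\varphi^2(t,a) = \exp\!\left\{f'(t)a - \tfrac{1}{2}\int_0^t (f'(u))^2\,du\right\}\varphi^3(t, a - f(t))
\]
turns the backward Schr\"odinger equation (\ref{kom1}) into the backward heat equation for $\varphi^3$ in the shifted variable $y = a - f(t)$; the analogous substitution used in the derivation of (\ref{sol2}) turns (\ref{kom2}) into the forward heat equation in $z = b - f(\tau)$. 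The composite gauge factor appearing in (\ref{g1}), namely $\exp\{f'(t)a - f'(\tau)b + \frac{1}{2}\int_t^\tau (f'(u))^2\,du\}$, is exactly the product of the backward prefactor by the reciprocal of the forward prefactor, with the two antiderivatives of $(f')^2$ combined into a single integral over $[t,\tau]$.

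I would then take the heat-equation Green's function on $\mathbb{R}^+$ with Dirichlet condition at $0$,
\[
\tilde G(t,y;\tau,z) = \frac{1}{\sqrt{2\pi(\tau-t)}}\left[\exp\!\left\{-\tfrac{(z-y)^2}{2(\tau-t)}\right\} - \exp\!\left\{-\tfrac{(z+y)^2}{2(\tau-t)}\right\}\right],
\]
and pull it back through both reductions by substituting $y = a - f(t)$, $z = b - f(\tau)$. The identity $z - y = b - a - \int_t^\tau f'(u)\,du$ yields the first Gaussian of (\ref{g1}), and the analogous computation for the image produces the second. Multiplying by the composite gauge factor produces the candidate $H$, and the Dirichlet boundary conditions $H(t,0;\tau,b) = H(t,a;\tau,0) = 0$ follow by inspection from the fact that the two Gaussian terms coincide on those boundaries.

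The remaining tasks are to verify that $H$ satisfies (\ref{kom1}) in the backward pair $(t,a)$, satisfies (\ref{kom2}) in the forward pair $(\tau,b)$, and has the initial limit $H(t,a;\tau,b) \to \delta(a-b)$ as $\tau\downarrow t$. The last is standard, since the gauge prefactor tends to $1$ and the image Gaussian is exponentially suppressed for positive $a,b$. The two PDE checks reduce to direct differentiation and cancellation, mirroring the algebra already carried out to obtain (\ref{sol1}) and (\ref{sol2}), now with $(\tau,b)$ playing the role of the space-time variable in the forward check. The main obstacle is a subtle feature of the construction: the gauge-and-shift reduction maps the Dirichlet boundary $a = 0$ (respectively $b = 0$) to the \emph{moving} boundary $y = -f(t)$ (respectively $z = -f(\tau)$) for the heat equation, so the use of the fixed-boundary image kernel $\tilde G$ is not automatic. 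Justifying this step requires showing that the composite gauge prefactor absorbs exactly the extra drift-type contributions that would otherwise prevent the image term from satisfying the backward and forward Schr\"odinger PDEs with linear potential; this compensating cancellation, dictated by the specific form of the potential $af''(t)$ in (\ref{kom1}) and $bf''(\tau)$ in (\ref{kom2}), is the technical heart of the argument.
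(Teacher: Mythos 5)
Your reconstruction coincides with the paper's own (very terse) proof: reduce (\ref{kom1}) and (\ref{kom2}) to backward and forward heat equations in $y=a-f(t)$, $z=b-f(\tau)$, take the half-line image kernel $\tilde G$, and pull everything back through the composite gauge factor. You are also right to single out the moving-boundary issue as the heart of the matter; the problem is that the ``compensating cancellation'' you defer to does not occur, so this is a genuine gap in both your argument and the paper's, not a routine verification. Concretely, set $E(t,a)=\exp\{\tfrac12\int_t^\tau(f'(u))^2du-f'(\tau)b+f'(t)a\}$, $\sigma^2=\tau-t$, and test the image term of (\ref{g1}) against (\ref{kom1}). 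Its Gaussian argument $m=b+a-\int_t^\tau f'(u)\,du$ satisfies $\partial_a m=+1$ and $\partial_t m=+f'(t)$, and a direct computation gives
\[
\left(\partial_t+\tfrac12\,\partial_{aa}-af''(t)\right)\left[\frac{E}{\sqrt{2\pi\sigma^2}}\,e^{-m^2/(2\sigma^2)}\right]
=-\,\frac{2f'(t)\,m}{\sigma^2}\cdot\frac{E}{\sqrt{2\pi\sigma^2}}\,e^{-m^2/(2\sigma^2)},
\]
which is not zero unless $f'\equiv0$. For the direct term one has instead $m_1=b-a-\int_t^\tau f'$ with $\partial_a m_1=-1$, the two cross terms cancel, and the equation holds. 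So the printed $H$ vanishes on $\{a=0\}\cup\{b=0\}$ and has the right $\tau\downarrow t$ limit, but it does not solve (\ref{kom1}) (nor, by the symmetric computation, (\ref{kom2})).

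The obstruction is exactly the feature you identified: the Dirichlet boundary $a=0$ corresponds to the moving line $y=-f(t)$, not to $y=0$. The honest pullback of the image $e^{-(z+y)^2/(2(\tau-t))}$ is the Gaussian in $b+a-f(\tau)-f(t)$; that term does satisfy both Schr\"odinger equations (its argument has $t$-derivative $-f'(t)$, so the cross terms cancel), but then the difference of the two Gaussians no longer vanishes at $a=0$ or $b=0$. No reflection fixes both defects at once: replacing $a$ by $-a$ in the whole-line propagator produces a solution of the equation with potential $-af''(t)$ rather than $+af''(t)$, so the subtracted image necessarily breaks the PDE whenever it restores the boundary condition. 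Equivalently, in the gauge variables the required object is the heat kernel killed on the moving boundary $y=-f(t)$ --- which is precisely the first-passage problem the paper set out to solve, so the construction is circular. Closing the argument would require a genuinely different construction of the half-line Dirichlet propagator for a linear potential (for a constant $f''$ this is an Airy eigenfunction expansion, as in Groeneboom's and Salminen's treatments of the parabolic boundary), not a refinement of the image method.
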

\noindent{\it Proof\/}. Recall that the backward and forward equations may be reduced correspondingly to:
\begin{eqnarray*}
-\frac{\partial\varphi^3}{\partial t}(t,y)&=&\frac{1}{2}\frac{\partial^2\varphi^3}{\partial y^2}(t,y)\\
\frac{\partial\psi^3}{\partial \tau}(\tau,z)&=&\frac{1}{2}\frac{\partial^2\psi^3}{\partial z^2}(\tau,z),
\end{eqnarray*}
where $y=a-f(t)$, and $z=b-f(\tau)$, for $0\leq t<\tau<s$, and $a,b\in\mathbb{R}^+\backslash\{0\}$. Thus, the Green's function  $\tilde{G}$ that links these two solutions should be of the form: 
\begin{eqnarray*}
\tilde{G}(t,y;\tau,z)&=&\frac{1}{\sqrt{2\pi(\tau-t)}}\left(\exp\left\{-\frac{(z-y)^2}{2(\tau-t)}\right\}-\exp\left\{-\frac{(z+y)^2}{2(\tau-t)}\right\}\right).
\end{eqnarray*}
Finally, equation (\ref{g1}) follows from equations (\ref{kom1}) and (\ref{kom2}). 
\\[0.3cm]
Which alternatively leads to
%\begin{eqnarray}
%\label{green}H(s,a;t,b)&=&\frac{1}{\sqrt{2\pi(s-t)}}\exp\left\{-\frac{\left[b-a-\int_t^sf'(u)du\right]^2}{2(s-t)}%%\right\}\\
%&&\times\exp\left\{\frac{1}{2}\int_t^s(f'(u))^2du-f'(s)b+f'(t)a\right\}\nonumber\\
%&&\times\frac{b}{a}\exp\left\{-\frac{(b-a)^2}{2(s-t)}\right\}
%\end{eqnarray}
%or possibly\\
%\noindent{\bf Corollary} From equations (\ref{sol1}) and (\ref{sol2}) it follows that:
%\begin{eqnarray}
%\label{green}H(s,a;t,b)&=&\frac{1}{\sqrt{2\pi(s-t)}}\exp\left\{-\frac{\left[b-a-\int_t^sf'(u)du\right]^2}{2(s-t)}%%\right\}\\
%&&\times\exp\left\{\frac{1}{2}\int_t^s(f'(u))^2du-f'(s)b+f'(t)a\right\}\nonumber\\
%&&\times\frac{b}{a}
%\end{eqnarray}
\begin{proposition}\label{prop1} Our initial equation (\ref{cauchy1}) has the following Green's function:
\begin{eqnarray*}
G(t,a;\tau,b)=\frac{\varphi_b(s-\tau)}{\varphi_a(s-t)}H(t,a;\tau,b)
\end{eqnarray*}
\end{proposition}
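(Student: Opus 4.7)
The plan is to compose the two chains of substitutions performed in Sections \ref{sec2} and \ref{ff2} and apply them as multipliers to the Schr\"odinger Green's function $H$ of equation (\ref{g1}). On the backward side, inserting $B(t)=1/(2(s-t))$, $A(t)=(s-t)^{3/2}$ (taking the free constant $c=1$) and $\varphi^1=\varphi^2/a$ into the ansatz $\varphi(t,a)=\varphi^1(t,a)A(t)\exp[B(t)a^2]$ collapses to
\[
\varphi(t,a)=\frac{\varphi^2(t,a)}{\sqrt{2\pi}\,\varphi_a(s-t)},
\]
since $\sqrt{2\pi}\,\varphi_a(s-t)=a(s-t)^{-3/2}\exp\{-a^2/(2(s-t))\}$. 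Symmetrically, on the forward side, substituting $B(\tau)=-1/(2(s-\tau))$, $A(\tau)=(s-\tau)^{-1/2}$, $\psi^1=b\psi^2$ and $\psi^2=\tilde{\psi}^2/(s-\tau)$ into $\psi(\tau,b)=\psi^1(\tau,b)A(\tau)\exp[B(\tau)b^2]$ collapses to
\[
\psi(\tau,b)=\sqrt{2\pi}\,\varphi_b(s-\tau)\,\tilde{\psi}^2(\tau,b).
\]

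Because the preceding proposition shows that $H(t,a;\tau,b)$ is a fundamental solution of the Schr\"odinger backward equation (\ref{kom1}) in $(t,a)$ and of the Schr\"odinger forward equation (\ref{kom2}) in $(\tau,b)$ simultaneously, applying the backward multiplier $1/(\sqrt{2\pi}\,\varphi_a(s-t))$ in the variables $(t,a)$ together with the forward multiplier $\sqrt{2\pi}\,\varphi_b(s-\tau)$ in the variables $(\tau,b)$ yields
\[
G(t,a;\tau,b)=\frac{1}{\sqrt{2\pi}\,\varphi_a(s-t)}\cdot\sqrt{2\pi}\,\varphi_b(s-\tau)\cdot H(t,a;\tau,b)=\frac{\varphi_b(s-\tau)}{\varphi_a(s-t)}\,H(t,a;\tau,b),
\]
the two $\sqrt{2\pi}$ factors cancelling. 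Since the propositions in Sections \ref{sec2} and \ref{ff2} were phrased as equivalences (the coefficients $A,B$ were chosen precisely so as to make each ansatz consistent), the $G$ so constructed automatically satisfies the backward Kolmogorov equation (\ref{cauchy1}) in $(t,a)$ and the dual Fokker-Planck equation in $(\tau,b)$: the backward multiplier depends only on $(t,a)$ and the forward multiplier only on $(\tau,b)$, so the two verifications decouple.

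What remains is the Dirac concentration demanded by Definition \ref{def2}: $\lim_{t\uparrow\tau}\int_0^\infty G(t,a;\tau,b)\tilde{f}(b)\,db=\tilde{f}(a)$ for every $\tilde{f}\in C_0(\mathbb{R}^+)$. Since $H$ was built by the method of images as a half-line Dirichlet heat kernel in the transformed variables $y=a-f(t)$, $z=b-f(\tau)$ dressed by smooth exponentials, it is classical that $H(t,a;\tau,b)$ concentrates as a delta on the diagonal $a=b\in\mathbb{R}^+$ as $t\uparrow\tau$. The prefactor $\varphi_b(s-\tau)/\varphi_a(s-t)$ is continuous in $b$ near $a$, bounded as $t\uparrow\tau$ (since $\tau<s$ keeps both densities away from their singularities), and equals $1$ at $b=a$, $t=\tau$; it therefore passes through the limit harmlessly. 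The only real obstacle is the bookkeeping needed to check that the backward and forward multipliers really are reciprocals up to the factor $\sqrt{2\pi}$, so that the overall constant comes out to unity; this is a one-line verification, and no analytic depth beyond what Sections \ref{sec2} and \ref{ff2} have already supplied is required.
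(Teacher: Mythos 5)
Your proposal is correct and follows essentially the same route as the paper: the paper's proof is just the one-line citation of the transformation chains (\ref{partial}), (\ref{partial2}) and the Schr\"odinger Green's function (\ref{g1}) together with ``verification of Definition \ref{def2}'', which is precisely the composition of multipliers and the delta-concentration check you carry out explicitly. Your version merely makes the bookkeeping (the cancellation of the $\sqrt{2\pi}$ factors and the identification of the multipliers with $1/\varphi_a(s-t)$ and $\varphi_b(s-\tau)$) explicit, which the paper leaves implicit.
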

\noindent{\it Proof\/}.  It follows from equations (\ref{partial}), (\ref{partial2}), and (\ref{g1}) and verification of Definition \ref{def2}.

\noindent{\bf Proof of Theorem \ref{thm}.} It follows from Proposition \ref{prop1}.
%\section{Concluding Remarks}

%In this paper we have derived an explicit equation for the density $\varphi_f$ of $T$ in the case in which $f>0$ and $f''>0$ yet we believe that this result can be extended to more general boundaries.
% (\ref{kom1}) and (\ref{kom2}) in the backward $(s,a)$ and the forward $(t,b)$ variables respectively.
%hence if

%i.e.
%\begin{eqnarray}
%\label{v} v(t,a)&=&\int_t^s\left(\int_{0}^{\infty}G(\tau,a;t,b)db\right)d\tau
%\end{eqnarray}
%\end{proposition}
%and the density $\varphi_T$ of the stopping $T$ defined in equation (\ref{stop}) equals
%\begin{eqnarray*}
%\varphi_T(s)=v(0,a)\varphi_a(s)\exp\left\{-\frac{1}{2}\int_0^s(f'(u))^2du-f'(0)a\right\}.
%\end{eqnarray*}
%Where $\varphi_a$ is defined in equation (\ref{level}).

\end{document}